\newtheorem{thm}{Theorem}[section]
\newtheorem{lemma}[thm]{Lemma}
\newtheorem{prop}[thm]{Proposition}
\newtheorem{crl}[thm]{Corollary}
\theoremstyle{definition}
\newtheorem{dfn}[thm]{Definition}
\newtheorem{rem}[thm]{Remark}
\newcommand{\rx}{\mathbb{R}[\underline{X}]}
\newcommand{\rfps}{\mathbb{R}[[\underline{X}]]}
\newcommand{\ux}{\underline{X}}
\newcommand{\sos}{\sum\mathbb{R}[\underline{X}]^2}
\newcommand{\reals}{\mathbb{R}}
\newcommand{\naturals}{\mathbb{N}}
\newcommand{\Pos}{\mbox{Psd}}
\newcommand{\la}{\langle}
\newcommand{\ra}{\rangle}
\newcommand{\supp}{\text{supp}}
\begin{document}
\title[Moment Problem Continuous Positive Semidefinite functionals]{The Moment Problem for Continuous Positive Semidefinite Linear functionals}
\author[M. Ghasemi]{Mehdi Ghasemi$^{*}$}
\author[S. Kuhlmann]{Salma Kuhlmann$^{**}$}
\author[E. Samei]{Ebrahim Samei$^{*1}$}
\date{\today}
\thanks{$^1$The third author was partially supported by an NSERC discovery grant.}
\address{$^{*}$Department of Mathematics and Statistics,\newline
University of Saskatchewan,\newline
Saskatoon, SK S7N 5E6, Canada}
\address{$^{**}$Fachbereich Mathematik und Statistik,\newline
Universit\~{a}t Konstanz\newline
78457 Konstanz, Germany}
\email{mehdi.ghasemi@usask.ca}
\email{salma.kuhlmann@uni-konstanz.de}
\email{samei@math.usask.ca}
\keywords{Positive polynomials, sums of squares, real algebraic geometry, moment problem,
weighted norm topologies}
\subjclass[2010]{Primary 14P99, 44A60; Secondary 12D15, 43A35, 46B99}
\begin{abstract}
Let $\tau$ be a locally convex topology on the countable dimensional polynomial $\reals$-algebra $\rx:=\reals[X_1,\ldots,X_n]$. Let $K$ be a closed subset of
$\reals^n$, and let $M:=M_{\{g_1, \cdots g_s\}}$ be a finitely generated quadratic module in $\rx$.
We investigate the following question: When is the cone $\Pos(K)$ (of polynomials
nonnegative on $K$) included in the closure of $M$? We give an interpretation of
this inclusion with respect to representing continuous linear functionals by measures.
We discuss several examples; we compute the closure of $M=\sos$ with respect to weighted norm-$p$ topologies.
We show that this closure coincides with the cone $\Pos(K)$ where $K$ is a certain convex compact polyhedron.
\end{abstract}
\maketitle
\section{Introduction}
Given a finite subset $S:=\{ g_1 ,\cdots, g_s \}$ of the polynomial
ring $\rx$, the question of approximating polynomials nonnegative on
the basic closed semialgebraic set $K_S$  via elements of the quadratic module $M_S$ (see Definition
\ref{preordering}) is a main topic in real algebraic geometry and
has many applications in optimization and functional analysis (see
Lasserre \cite{JL-OptMom}). Putinar's Archimedean Positivstellensatz
\cite{Pu} 
brought important improvements to the Positivstellensatz \cite{Kr}:
for $K$ a {\it compact} basic closed semialgebraic set, and $S$ any
representation of $K$ (containing the inequality $N-\sum x_i ^2 \geq
0$ expressing that $K:=K_S$ is bounded, for some $N\in \mathbb{N}$),
any polynomial $f>0$ on $K_S$ belongs to $M_S$. The above results
have direct applications to the moment problem for semialgebraic
sets. Given a closed set $K\subseteq\reals^n$, the $K$-moment
problem is the question of when a linear functional
$\ell:\rx\rightarrow\reals$ is representable as integration with
respect to a positive Borel measure on $K$. Denoting the set of all
nonnegative polynomials on $K$ by $\Pos(K)$, a necessary condition
is that $\ell(f)\ge0$, for $f\in \Pos(K)$. In \cite{Hav1,Hav2},
Haviland proved that this necessary condition is sufficient. However
in general $\Pos(K)$ is not finitely generated,
so Haviland's result may be impractical. Fortunately, it
follows from Putinar's Archimedean Positivstellensatz that
nonnegativity of $\ell$ on $\Pos(K_S)$ is ensured once nonnegativity
of $\ell$ on the {\it finitely generated} archimedean quadratic
module $M_S$ (Definition \ref{preordering}) is established. Thus one is reduced to checking $s+2$
many systems of inequalities: 
\begin{equation}\label{archineq}
\begin{array}{ll}
	\ell(h^2 g_i)\ge0 & \mbox{ for }h\in\rx,\>i=0,\dots,s+1,\\
	g_0:=1, & g_{s+1}:=(N-\sum x_i ^2).
\end{array}
\end{equation}
Thus the $K_S$ -
moment problem is ``finitely solvable''. This can be summarized in a
topological statement: If
$\Pos(K_S)\subseteq\overline{M_S}^{\varphi}$, then the $K_S$- moment
problem for $M_S$ is finitely solvable. Here, $\varphi$ denotes the
finest locally convex topology on $\rx$, and
$\overline{M_S}^{\varphi}$ denotes the closure with respect to that
topology. In \cite{BCR, BCRBK, JLN} coarser topologies are considered: it is
shown that $\sos$ is dense in $\Pos([-1,1]^n)$ for the $\ell_1$
topology on $\rx$. In the language of moments, this result means
that every $\ell_1$ continuous linear functional, nonnegative on
$\sos$ (i.e. a {\it positive semidefinite functional}), is
representable by a positive Borel measure on $[-1,1]^n$.

In this paper, we generalize this setting to arbitrary locally
convex topologies on $\rx$. In Section \ref{SecPre}, we review
background on topological vector spaces.  In Section
\ref{KMPsec}, we present our setting as a threefold statement about
a locally convex topology $\tau$, a closed subset $K$ of $\reals^n$,
and a cone $C$ in $\rx$. We observe that if
$\Pos(K)\subseteq\overline{C}^{\tau}$ then any $\tau$-continuous
functional, nonnegative on $C$, is integration with respect to a
positive Borel measure on $K$ (see Proposition \ref{KMP}). We apply the above setting to the
example of the weighted norm-$p$ topologies, see Remark \ref{toz} and Theorems
\ref{WGLNK}, \ref{L1Case}, \ref{T:KMP-weight p-norm} and
\ref{T:KMP-weight p-norm2}. We compute the closure of the cone
$C:=\sos$ in the topology of coefficientwise convergence (see Proposition \ref{pwmp}).

Finally, we mention that this point of view has been recently
revisited: in \cite{JL:K-Moment} Lasserre proves that for a certain
fixed norm $\|\cdot\|_{w}$ defined in \cite{JL:K-Moment}, and any finite $S$,
$\overline{M_S}^{\|\cdot\|_{w}}=\Pos(K_S)$. Also, it follows from
recent work \cite{GMW} that Theorem \ref{GLNK} and Theorem
\ref{WGLNK} carry over with the cone $\sos$ replaced by the cone of
sums of $2d$-powers, $\sum\rx^{2d}$, for any integer $d\ge1$. In a forthcoming paper
\cite{G-K}, we compute the closure of $\sum\rx^{2d}$ with respect to
other locally convex topologies.

We wish to thank J. Cimpric, M. Marshall and V. Vinnikov for useful comments on preliminary versions of this paper.
\section{Preliminaries}\label{SecPre}
In this section, we give some background from functional analysis.
To make the paper self-contained and for the sake of the reader, we
have given a thorough explanation in this section.
\subsection{Background on Topological Vector Spaces}
In the following, all vector spaces are over the field of real numbers (unless otherwise specified).
A \textit{vector space topology} on a vector space $V$ is a topology $\tau$ on $V$ such that every point of $V$ is closed and the vector space operations, i.e., 
vector addition and scalar multiplication are $\tau$-continuous. A \textit{topological vector space} is a pair $(V,\tau)$ where $V$ is a vector space and $\tau$ is 
a vector space topology on $V$. A standard argument shows that $\tau$ is Hausdorff.

A subset $A\subseteq V$ is said to be \textit{convex} if for every $x,y\in A$ and $\lambda\in[0,1]$, $\lambda x+(1-\lambda)y\in A$.
A \textit{locally convex} topology is a vector space topology which admits a neighbourhood basis of convex open sets at each point.
A norm on $V$ is a function $\|\cdot\|:V\rightarrow\reals^{\ge0}$ satisfying
\begin{enumerate}
    \item{$\|v\|=0\Leftrightarrow x=0$,}
    \item{$\forall\lambda\in\reals,~\|\lambda v\|=|\lambda|\|v\|$,}
    \item{$\forall v_1,v_2\in V,~\|v_1+v_2\|\leq\|v_1\|+\|v_2\|$.}
\end{enumerate}
A topology $\tau$ on $V$ is said to be \textit{normable} (respectively \textit{metrizable}), if there exists a norm (respectively
metric) on $V$ which induces the same topology as $\tau$.
Every norm induces a locally convex metric topology on $V$ where the induced metric is defined by $d(v_1,v_2)=\|v_1-v_2\|$.
\begin{rem}
For two normed space $(X,\|\cdot\|)$ and $(Y,\|\cdot\|')$, a linear operator $T:X\rightarrow Y$ is said to be \textit{bounded} if there exists
$N\geq0$ such that for all $x\in X$, $\|Tx\|'\leq N\|x\|$. A standard result states that boundedness and continuity in normed
spaces are equivalent.
\end{rem}
For a topological vector space $(V, \tau)$ we denote the set of all continuous linear functionals $\ell:V\rightarrow\reals$ by $V^*$.
\begin{dfn}
For $C\subseteq V$, let 
\[
	C_{\tau}^{\vee}=\{\ell\in V^* : \ell\ge0\textrm{ on } C\}
\] 
be the \textit{first dual} of $C$ and define
the \textit{second dual} of $C$ by
\[
	C_{\tau}^{\vee\vee}=\{a\in V : \forall \ell\in C_{\tau}^{\vee},~\ell(a)\ge0\}.
\]
\end{dfn}
The following is immediate from the definition:
\begin{prop}\label{dulprop}
For a locally convex topological vector space $(V,\tau)$ and $C,D\subseteq V$ the following holds
\begin{enumerate}
    \item{$C\subseteq D\Rightarrow D_{\tau}^{\vee}\subseteq C_{\tau}^{\vee}$,}
    \item{$C\subseteq C_{\tau}^{\vee\vee}$,}
    \item{$C_{\tau}^{\vee\vee\vee}=C_{\tau}^{\vee}$.}
\end{enumerate}
\end{prop}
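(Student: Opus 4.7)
All three statements are formal consequences of unwinding the definitions of the first and second dual, and I anticipate no real obstacle; the main bookkeeping task is to keep track of the ambient space of each object, noting that $C_\tau^{\vee}\subseteq V^{*}$ while $C_\tau^{\vee\vee}\subseteq V$.

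For part (1), the plan is to fix an arbitrary $\ell\in D_\tau^{\vee}$, read off that $\ell\geq 0$ on $D$, and use the hypothesis $C\subseteq D$ to conclude $\ell\geq 0$ on $C$, giving $\ell\in C_\tau^{\vee}$.

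For part (2), I would take $a\in C$ together with any $\ell\in C_\tau^{\vee}$; the defining property of the first dual forces $\ell(a)\geq 0$, which is precisely the condition that places $a$ inside $C_\tau^{\vee\vee}$.

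For part (3), I would prove the two inclusions separately. The inclusion $C_\tau^{\vee\vee\vee}\subseteq C_\tau^{\vee}$ is purely formal from (1) and (2): by (2) one has $C\subseteq C_\tau^{\vee\vee}$, and then applying (1) to this inclusion yields $(C_\tau^{\vee\vee})_\tau^{\vee}\subseteq C_\tau^{\vee}$, which is exactly the desired containment. For the reverse inclusion $C_\tau^{\vee}\subseteq C_\tau^{\vee\vee\vee}$, the idea is to mimic the proof of (2) on the dual side: for any $\ell\in C_\tau^{\vee}$ and any $a\in C_\tau^{\vee\vee}$, the very definition of $C_\tau^{\vee\vee}$ as the set of vectors on which every element of $C_\tau^{\vee}$ is nonnegative guarantees $\ell(a)\geq 0$. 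Thus $\ell$ is nonnegative on $C_\tau^{\vee\vee}$ and therefore lies in $C_\tau^{\vee\vee\vee}$.
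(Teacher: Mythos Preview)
Your proposal is correct and matches the paper's approach: the paper simply states that the proposition ``is immediate from the definition'' and gives no further argument, and your plan is precisely the routine unwinding of the definitions that this remark alludes to.
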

A subset $C$ of $V$ is called a \textit{cone} if $C+C\subseteq C$
and $\reals^+C\subseteq C$. It is clear that $C$ is convex.
\begin{thm}[Separation]\label{BST}
Suppose that $A$ and $B$ are disjoint nonempty convex sets in $V$. If $A$ is open, then there exists $\ell\in V^*$ and
$\gamma\in\reals$
such that $\ell(x)<\gamma\leq\ell(y)$ for every $x\in A$ and $y\in B$. Moreover, if $B$ is a cone, then $\gamma$ can be taken to be
$0$.
\end{thm}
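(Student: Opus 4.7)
The plan is to reduce the separation statement to the Hahn--Banach extension theorem applied to the Minkowski functional of a suitable open convex neighborhood of the origin.

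First I would form the difference set $C := A-B = \{a-b : a \in A,\, b \in B\}$. Writing $C = \bigcup_{b \in B}(A-b)$ exhibits $C$ as a union of open translates, hence open; convexity of $A$ and $B$ gives convexity of $C$; and $A \cap B = \emptyset$ is exactly $0 \notin C$. Pick any $x_0 \in C$ and set $U := C - x_0$, an open convex neighborhood of $0$ with $-x_0 \notin U$.

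Next I would introduce the Minkowski functional $p_U(v) := \inf\{t > 0 : v/t \in U\}$. The standard verifications give that $p_U$ is sublinear and that $U = \{v : p_U(v) < 1\}$; the latter identity forces $p_U$ to be continuous at $0$, hence continuous on $V$ by subadditivity. Since $-x_0 \notin U$, we have $p_U(-x_0) \geq 1$. On the line $W := \reals \cdot(-x_0)$, define $\ell_0\bigl(\lambda(-x_0)\bigr) := \lambda$: a two-case check ($\lambda \geq 0$ versus $\lambda < 0$) shows $\ell_0 \leq p_U$ on $W$. Hahn--Banach then produces a linear extension $\ell : V \to \reals$ with $\ell \leq p_U$ on $V$; combining this with $-\ell(v) = \ell(-v) \leq p_U(-v)$ and the continuity of $p_U$ at $0$ yields continuity of $\ell$ at $0$, hence $\ell \in V^*$.

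With $\ell$ in hand, any $c \in C$ satisfies $c-x_0 \in U$, so $\ell(c) = \ell(c-x_0) + \ell(x_0) < 1 - 1 = 0$. Unpacking $c = a - b$ gives $\ell(a) < \ell(b)$ for every $a \in A$, $b \in B$. Because $\ell(-x_0) = 1$, the functional $\ell$ is nonzero and therefore a continuous open surjection $V \to \reals$, so $\ell(A)$ is an open subset of $\reals$; setting $\gamma := \sup \ell(A)$, which is finite as $\ell(A)$ is bounded above by any $\ell(b)$, yields the desired $\ell(a) < \gamma \leq \ell(b)$. For the cone case, $B$ being a cone lets us substitute $\lambda b$ for $b$ at every $\lambda > 0$: letting $\lambda \to 0^{+}$ in $\gamma \leq \lambda \ell(b)$ forces $\gamma \leq 0$, while boundedness below of $\lambda \ell(b)$ as $\lambda \to \infty$ forces $\ell(b) \geq 0$, so one can replace $\gamma$ by $0$.

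The main obstacle is not a single hard idea but orchestrating the Minkowski-functional machinery correctly: one has to verify sublinearity, the identity $U = \{p_U < 1\}$, and continuity, and then promote these to continuity of the Hahn--Banach extension. Everything else---the strictness of the separating inequality and the cone refinement---is then a matter of openness of $\ell$ and a short scaling argument.
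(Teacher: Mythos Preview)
Your argument is correct. The paper does not give an independent proof of the basic separation statement at all: it simply cites \cite[Theorem~3.4]{RFA} and then handles the cone refinement by the same scaling argument you use (sending $\epsilon y$ with $\epsilon\to 0$ to force $\gamma\le 0$, and $ry$ with $r\to\infty$ to force $\ell\ge 0$ on $B$). What you have written for the first part---forming $A-B$, translating to an open convex neighborhood of $0$, dominating a linear functional on a line by its Minkowski gauge, and extending by Hahn--Banach---is precisely Rudin's proof of that cited theorem, so your route is not genuinely different, just self-contained where the paper outsources.
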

\begin{proof}
For the first part, see \cite[Theorem 3.4]{RFA}. Suppose that $B$ is
a cone and suppose that $\ell$ and $\gamma$ are given by first part.
If $\gamma>0$, then $\ell(y)>0$ for all $y\in B$. Therefore
$\forall\epsilon>0~\epsilon y\in B$ so
$0<\gamma\leq\ell(\epsilon y)=\epsilon\ell(y)\xrightarrow{\epsilon\rightarrow0}0$.
This implies that $\gamma\leq0$.
Note also that $\ell\ge0$ on $B$. Otherwise, $\ell(x)<\gamma\leq\ell(y)<0$ for any $x\in A$ and some $y\in B$. Then for $r>0$,
$ry\in B$ and
$\ell(x)<\gamma\leq\ell(ry)=r\ell(y)\xrightarrow{r\rightarrow\infty}-\infty$
which is impossible. Therefore
$\ell(x)<\gamma\leq0\leq\ell(y)\quad\forall x\in A, \forall y\in B$.
Hence $\gamma$ can be chosen to be $0$.
\end{proof}
Below $\overline{C}^{\tau}$ denotes the closure of $C$ with respect to $\tau$. It follows that:
\begin{crl}[Duality]\label{duality}
For any nonempty cone $C$ in $(V,\tau)$, $\overline{C}^{\tau}=C_{\tau}^{\vee\vee}$.
\end{crl}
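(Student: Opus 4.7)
My plan is to establish the two inclusions $\overline{C}^{\tau}\subseteq C_{\tau}^{\vee\vee}$ and $C_{\tau}^{\vee\vee}\subseteq \overline{C}^{\tau}$ separately. The first inclusion is essentially formal; the second is where the separation theorem does the real work.

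For the easy direction, I would first observe that $C\subseteq C_{\tau}^{\vee\vee}$ by part (2) of Proposition \ref{dulprop}. It then suffices to show $C_{\tau}^{\vee\vee}$ is $\tau$-closed. This is immediate from the identity
\[
C_{\tau}^{\vee\vee}=\bigcap_{\ell\in C_{\tau}^{\vee}}\ell^{-1}([0,\infty)),
\]
since each $\ell$ is $\tau$-continuous and $[0,\infty)$ is closed in $\reals$, so $C_{\tau}^{\vee\vee}$ is an intersection of closed half-spaces. Taking closures then gives $\overline{C}^{\tau}\subseteq C_{\tau}^{\vee\vee}$.

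For the hard direction, I would argue by contrapositive: suppose $a\in V\setminus\overline{C}^{\tau}$ and produce $\ell\in C_{\tau}^{\vee}$ with $\ell(a)<0$, which witnesses $a\notin C_{\tau}^{\vee\vee}$. Since $\tau$ is locally convex and $V\setminus\overline{C}^{\tau}$ is an open neighborhood of $a$, there is a convex open neighborhood $U$ of $a$ disjoint from $\overline{C}^{\tau}$. Note that $\overline{C}^{\tau}$ is itself a convex cone, because the closure of a convex cone in a topological vector space is a convex cone (using continuity of addition and scalar multiplication). Now I apply Theorem \ref{BST} with $A=U$ and $B=\overline{C}^{\tau}$: since $B$ is a cone, the separating functional $\ell\in V^*$ can be chosen so that $\ell(x)<0\leq\ell(y)$ for all $x\in U$ and $y\in\overline{C}^{\tau}$. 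In particular $\ell\geq0$ on $C$, so $\ell\in C_{\tau}^{\vee}$, while $\ell(a)<0$, as required.

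The main obstacle is the second inclusion, which genuinely requires the geometric Hahn--Banach separation theorem and the fact that the closure of a cone is a cone; both are available from the preliminaries already established. No issues arise from $C$ being merely a cone rather than closed, since the separation is carried out against $\overline{C}^{\tau}$.
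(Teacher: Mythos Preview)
Your proof is correct and follows essentially the same approach as the paper: both directions are handled as you describe, with the separation theorem applied in the contrapositive for the nontrivial inclusion. The only cosmetic difference is that the paper separates $U$ directly from $C$ (which is already a cone by hypothesis) rather than from $\overline{C}^{\tau}$, sparing the need to verify that the closure of a cone is a cone.
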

\begin{proof}
Since each $\ell\in C_{\tau}^{\vee}$ is continuous, for any $a\in\overline{C}^{\tau}$, $\ell(a)\ge0$, so $\overline{C}^{\tau}\subseteq C_{\tau}^{\vee\vee}$.
Conversely, if $a\not\in\overline{C}^{\tau}$ then since $\tau$ is locally convex, there exists an open convex set $U$ of $V$
containing $a$ with $U\cap C=\emptyset$. By \ref{BST}, there exists $\ell\in C_{\tau}^{\vee}$ such that $\ell(a)<0$, so $a\not\in C_{\tau}^{\vee\vee}$.
\end{proof}
\subsection{Finest Locally Convex Topology on $\rx$}
Let $V$ be any vector space over $\reals$ of countable infinite
dimension.
We define the direct limit topology
$\varphi$ on $V$ as follows: $U\subseteq V$ is open if and only if
$U\cap W$ is open in $W$ for each finite dimensional subspace $W$ of
$V$.
\begin{thm}
The open sets in $V$ which are convex form a basis for the direct limit topology.
Moreover $(V,\varphi)$ is a topological vector space and $\varphi$ is the finest locally convex topology on $V$.
\end{thm}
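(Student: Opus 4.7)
The plan is to prove three assertions: (a) convex open sets form a basis for $\varphi$; (b) $(V,\varphi)$ is a topological vector space; (c) any locally convex topology $\tau'$ on $V$ is coarser than $\varphi$. The technical heart is (a); (b) and (c) then follow routinely.

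First, fix a Hamel basis $\{e_i\}_{i\ge 1}$ of $V$ and set $W_n:=\text{span}\{e_1,\dots,e_n\}$, so $V=\bigcup_n W_n$. Since every finite-dimensional subspace of $V$ lies in some $W_n$, a set $U$ is $\varphi$-open iff $U\cap W_n$ is Euclidean-open in $W_n$ for every $n$. I read off at once that translations and nonzero scalar multiplications are $\varphi$-homeomorphisms (each restricts to a Euclidean homeomorphism of every $W_n$, after embedding in a common larger $W_m$). Hence for (a) it suffices to find a convex $\varphi$-open subset of any $\varphi$-open neighbourhood $U$ of the origin.

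I would construct such a subset recursively. Build bounded convex Euclidean-open sets $C_n\subseteq W_n$ with $0\in C_n$, $C_n\subseteq C_{n+1}$, and $W_n$-closure $\overline{C_n}\subseteq U\cap W_n$. Take $C_1$ to be any small open ball around $0$ with $\overline{C_1}\subseteq U\cap W_1$. At the inductive step, the compact set $\overline{C_n}$ sits inside the open set $U\cap W_{n+1}$, so by a standard tube argument there exists $\delta>0$ with the $\delta$-neighbourhood of $\overline{C_n}$ in $W_{n+1}$ contained in $U\cap W_{n+1}$; set $C_{n+1}:=C_n+\{te_{n+1}:|t|<\delta/2\}$, a Minkowski sum of convex open sets, hence convex and open, satisfying $C_n\subseteq C_{n+1}$ and $\overline{C_{n+1}}\subseteq U\cap W_{n+1}$. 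Finally let $C:=\bigcup_n C_n$; it is convex as an increasing union of convex sets, contained in $U$, and for every $m$ the decomposition $C\cap W_m=\bigcup_{n\ge m}(C_n\cap W_m)$ exhibits $C\cap W_m$ as an increasing union of Euclidean-open sets, so $C$ is $\varphi$-open. I expect this recursive packaging---maintaining boundedness so that the compactness/tube step applies at every stage---to be the main obstacle.

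Given (a), (b) follows easily. Points are closed since $\{v\}\cap W_n$ is Euclidean-closed in each $W_n$. For continuity of addition at $(v_1,v_2)$, given $\varphi$-open $U\ni v_1+v_2$, use (a) to get convex $\varphi$-open $C$ with $v_1+v_2\in C\subseteq U$, and set $C':=\tfrac12(C-v_1-v_2)$; then $C'$ is a convex $\varphi$-open neighbourhood of $0$, and convexity gives $C'+C'=C-v_1-v_2$, so $(v_1+C')+(v_2+C')\subseteq U$. Continuity of scalar multiplication is analogous after first replacing $C'$ by the symmetric convex open set $B:=C'\cap(-C')$ (which is automatically balanced, being convex, symmetric and containing $0$) and splitting $\lambda v-\lambda_0 v_0$ into three terms handled by the absorbing and balanced properties of $B$. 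For (c), any locally convex $\tau'$ on $V$ restricts on each finite-dimensional $W$ to a Hausdorff locally convex vector space topology, which by the classical uniqueness theorem equals the Euclidean topology on $W$; hence every $\tau'$-open set meets each $W$ in a Euclidean-open set, i.e.\ is $\varphi$-open.
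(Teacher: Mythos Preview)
Your argument is correct. The paper itself gives no self-contained proof, deferring instead to Marshall's book (Section~3.6 and Theorem~3.6.1); from the paper's subsequent remark on non-metrizability one sees that the cited proof builds convex basic neighbourhoods of $0$ of the box form $\prod_{i\ge 1}\langle -a_i,a_i\rangle$, which is exactly what your recursion produces (an interval in the one-dimensional $W_1$, then successive Minkowski sums with an interval in each new coordinate direction), so your approach coincides in substance with the referenced one.
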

\begin{proof}
\cite[Section 3.6 and Theorem 3.6.1]{MPS}.
\end{proof}
\begin{rem}\label{NNT}
(i) The vector space $(V,\varphi)$ is not metrizable.
Let $U$ be a neighbourhood of $0$ in $V$. From the proof of \cite[Theorem 3.6.1]{MPS}, there exist
$a_i\in\reals^{>0}$, $i=1,2,\ldots$, such that $\prod_{i=1}^{\infty}\langle-a_i,a_i\rangle\subseteq U$, where
\[\prod_{i=1}^{\infty}\langle-a_i,a_i\rangle=\left\lbrace\sum_{i}t_ie_i:-a_i<t_i<a_i\right\rbrace,\]
and $\{e_i\}_{i=1}^{\infty}$ forms a basis for $V$ and all summands are
$0$ except for finitely many $i$.
If there exists a countable neighbourhood basis at $0$ then there exist real numbers $a_{ij}$, $i,j=1,2,\dots$ such that
$\prod_{i=1}^{\infty}\langle-a_{1i},a_{1i}\rangle, \prod_{i=1}^{\infty}\langle-a_{2i},a_{2i}\rangle, \dots$
forms a neighbourhood basis at $0$. Take $0<b_i<a_{ii}$ for each $i$, then $\prod_{i=1}^{\infty}\langle-b_i,b_i\rangle$ is a neighbourhood of $0$ which
does not contain any of the above basic open sets, a contradiction.

(ii) Every linear functional is continuous with respect to $\varphi$. For the weak topology (induced by the set of all linear functionals), convex sets have the same closure as they have under $\varphi$ \cite[Theorem 3.12]{RFA}.

(iii) Direct limit topology and finest locally convex topology are defined even when $V$ is uncountably infinite dimensional. But they only coincide when the
space is countable dimensional.
\end{rem}

\subsection{Moment Problem}
In analogy to the classical Riesz Representation Theorem, Haviland considered the problem
of representing linear functionals on
the algebra of polynomials by measures. The question of when, given a closed subset $K$ in $\reals^n$,
a linear map
$\ell:\rx\rightarrow\reals$ corresponds to a finite positive Borel measure $\mu$ on $K$ is known as
the Moment Problem.
\begin{dfn}
For a subset $K\subseteq\reals^n$, define the \textit{cone of nonnegative polynomials} on $K$ by
\[
\Pos(K)=\{f\in\rx~:~\forall x\in K~f(x)\ge0\}.
\]
\end{dfn}
\begin{thm}[Haviland]\label{haviland}
For a linear function $\ell:\rx\rightarrow\reals$ and a closed set $K\subseteq\reals^n$, the following
are equivalent:
\begin{enumerate}
	\item{There exists a positive regular Borel measure $\mu$ on $K$ such that, \[\forall f\in\rx \quad \ell(f)=\int_K f~d\mu.\]}
	\item{$\forall f\in \Pos(K)~\ell(f)\ge0$.}
\end{enumerate}
\end{thm}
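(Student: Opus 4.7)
The implication $(1)\Rightarrow(2)$ is immediate: if $\ell$ is integration against a positive measure supported on $K$ and $f\in\Pos(K)$, then $f\geq 0$ on $K$ forces $\ell(f)=\int_K f\,d\mu\geq 0$ by monotonicity of the Lebesgue integral. All the substance lies in the converse, which I plan to attack by extending $\ell$ to a positive linear functional on a vector space containing $C_c(\reals^n)$ and then invoking the Riesz--Markov--Kakutani representation theorem.

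\textbf{Extension and measure.} I will form $\mathcal{A}:=\rx+C_c(\reals^n)$, realized as a space of real-valued functions on $\reals^n$, and order it by declaring $h\succeq 0$ if and only if $h(x)\geq 0$ for every $x\in K$. For any $g\in C_c(\reals^n)$ the constant polynomial $\|g\|_\infty$ satisfies $\|g\|_\infty\pm g\succeq 0$, so $\rx$ is a majorizing subspace. The positive-functional version of Hahn--Banach (applied via the sublinear upper envelope $h\mapsto\inf\{\ell(q):q\in\rx,\ q\geq h\text{ on }K\}$, which is finite by majorization) converts hypothesis $(2)$ into a positive linear extension $\tilde\ell:\mathcal{A}\to\reals$ of $\ell$. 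Its restriction to $C_c(\reals^n)$ is a classical positive functional, so Riesz--Markov--Kakutani produces a positive regular Borel measure $\mu$ on $\reals^n$ with $\tilde\ell(g)=\int g\,d\mu$ for every $g\in C_c(\reals^n)$. A Urysohn bump argument -- for $V\subset\reals^n\setminus K$ precompact open, take $g\in C_c(\reals^n)$ with $0\leq g\leq 1$, $g\equiv 1$ on $V$, $g\equiv 0$ on $K$; then both $\pm g\succeq 0$, hence $\tilde\ell(g)=0=\mu(V)$ -- combined with outer regularity forces $\supp\mu\subseteq K$.

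\textbf{Agreement on polynomials.} It remains to verify $\ell(f)=\int f\,d\mu$ for each $f\in\rx$. Pick cutoffs $\phi_k\in C_c(\reals^n)$ with $0\leq\phi_k\leq 1$ and $\phi_k\nearrow 1$. Since $(1+f^2)(1-\phi_k)=(1+f^2)-\phi_k(1+f^2)\in\mathcal{A}$ and is $\succeq 0$, positivity of $\tilde\ell$ gives $\int\phi_k(1+f^2)\,d\mu\leq\ell(1+f^2)$; monotone convergence then delivers $\int(1+f^2)\,d\mu\leq\ell(1+f^2)<\infty$, so $\mu$ is finite and $f\in L^2(\mu)\subset L^1(\mu)$. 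Splitting $\ell(f)=\tilde\ell(\phi_k f)+\tilde\ell((1-\phi_k)f)$, dominated convergence takes care of $\tilde\ell(\phi_k f)=\int\phi_k f\,d\mu\to\int f\,d\mu$.

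\textbf{Main obstacle.} The substantive difficulty is the tail $\tilde\ell((1-\phi_k)f)$. The AM--GM bound $\pm f\leq(1+f^2)/2$ together with $1-\phi_k\geq 0$ yields $|\tilde\ell((1-\phi_k)f)|\leq\tilde\ell((1-\phi_k)(1+f^2)/2)$, but driving this to zero amounts to the equality $\ell((1+f^2)/2)=\int(1+f^2)/2\,d\mu$ -- the theorem's conclusion for a higher-degree polynomial, so one is tempted into an infinite regress. I expect to bypass this by first treating \emph{compact} $K$, where $\rx|_K$ sits inside $C(K)$ with $1$ an order unit, so positive Hahn--Banach extends $\ell$ directly to all of $C(K)$ and Riesz closes the argument with no gap; the general closed-$K$ case then reduces to the compact one by exhausting $K=\bigcup_n K_n$ with $K_n$ compact, extracting representing measures $\mu_n$ whose tightness (hence weak-$*$ precompactness) is controlled by the uniform moment bounds $\ell(1+\|x\|^{2N})$, and taking a limit measure that represents $\ell$ on all of $\rx$.
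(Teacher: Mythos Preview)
The paper does not supply a proof of Haviland's theorem; it is quoted as a classical result with references \cite{Hav1,Hav2}, so there is no argument in the paper to compare against and I assess yours on its own. Your compact-$K$ paragraph is fine, but the exhaustion step meant to carry the non-compact case collapses: to apply the compact case to $K_n\subset K$ you would need $\ell\geq 0$ on $\Pos(K_n)$, yet the inclusion runs the wrong way---since $K_n\subseteq K$ one has $\Pos(K)\subseteq\Pos(K_n)$, and the hypothesis $\ell\geq 0$ on $\Pos(K)$ says nothing about the strictly larger cone $\Pos(K_n)$. There is thus no reason for representing measures $\mu_n$ on $K_n$ to exist at all, and the tightness/weak-$*$ machinery never gets off the ground. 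This is precisely the case with content: for compact $K$ the theorem is a routine consequence of Riesz, while the non-compact case is Haviland's actual contribution.

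The ``infinite regress'' you flagged in your first approach is genuine but has a one-line repair that works already in your space $\mathcal{A}=\rx+C_c(\reals^n)$. The misstep is bounding $|f|$ by $(1+f^2)/2$, whose growth merely matches that of $f^2$; instead choose a nonnegative polynomial $q$ that \emph{strictly} dominates $f$ at infinity, e.g.\ $q=1+\sum_i X_i^{2N}$ with $2N>\deg f$, so that $|f|/q\to 0$ as $|x|\to\infty$. Given $\epsilon>0$ there is then a compact set $C$ with $|f|\leq\epsilon q$ off $C$; once $\phi_k\equiv 1$ on $C$ one has $\epsilon q\pm(1-\phi_k)f\succeq 0$, and positivity of $\tilde\ell$ yields $|\tilde\ell((1-\phi_k)f)|\leq\epsilon\,\tilde\ell(q)=\epsilon\,\ell(q)$, a \emph{fixed} finite constant times $\epsilon$. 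Letting $\epsilon\to 0$ kills the tail with no circularity. This is essentially the adapted-space device; see Marshall \cite{MPS}.
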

The main challenge in applying Haviland's Theorem is verifying its
condition (2). We analyse this problem for a certain class of
closed subsets.
\begin{dfn}\label{preordering}
A subset $K\subseteq\reals^n$  is called a \textit{basic closed
semialgebraic} set if there exists a finite set of polynomials
$S=\{g_1,\ldots,g_s\}$ such that
$K=K_S:=\{x\in\reals^n~:~g_i(x)\ge0,~i=1,\ldots,s\}\>.$ Note that we
can define $K_C=\{x\in\reals^n~:~\forall f\in C ~ f(x)\ge0\}$
for any subset $C\subseteq\rx$, but this set may not be a semialgebraic
set. A subset $M$ of $\rx$ is called a \textit{quadratic module} if
$1\in M$, $M+M\subseteq M$, and for each $h\in\rx$, $h^2M\subseteq
M$. For $S=\{g_1,\ldots,g_s\}$, let
\[
    M_S:=\{\sum_{i=0}^s\sigma_ig_i : \sigma_i\in\sos\textrm{ for } i=0,\dots,s\textrm{ and } g_0=1\}.
\]
One can check that $M_S$ is the smallest quadratic module
of $\rx$ containing $S$. Clearly $M_S\subseteq\Pos(K_S)$.

A quadratic module $M$ of $\rx$ is said to be \textit{archimedean} if for a sufficiently large integer $N$, $(N-\sum x_i ^2)\in M$.
\end{dfn}
\begin{rem}
Note that to check whether a given linear functional
$\ell:\rx\rightarrow\reals$ is nonnegative on the quadratic module $M_S$, it suffices to
verify the following:
\begin{equation}\label{haveq}
	\ell(h^2 g_i)\ge0 \mbox{ for }h\in\rx,\>i=0,\dots,s,~ g_0:=1\>.
\end{equation}
If $\Pos(K_S)\subseteq(M_S)^{\vee\vee}_{\varphi}$, then, by Haviland's Theorem, every linear
functional nonnegative on $M_S$ corresponds to a measure on $K_S$.
Since $M_S$ is a cone in $\rx$ it follows by Corollary
\ref{duality} that $(M_S)^{\vee\vee}_{\varphi}=\overline{M}_S^{\varphi}$.
Therefore we are interested in the inclusion
\begin{equation}\label{SMP}
\Pos(K_S)\subseteq\overline{M}_S^{\varphi}.
\end{equation}
In other words, for a given basic closed semialgebraic set $K$, if
one can find a finite $S\subseteq\rx$ such that $K=K_S$ and at the
same time inclusion \eqref{SMP} holds, then the problem of
representing a functional by a measure on $K$ is reduced to
verifying that conditions \eqref{haveq} hold. \end{rem}

\section{The Moment Problem for functionals continuous with respect to a locally convex topology}\label{KMPsec}
We have seen that every linear functional is continuous with respect
to $\varphi$. We now consider a linear functional $\ell$, continuous
with respect to an arbitrary locally convex topology on $\rx$. We
further consider an arbitrary closed subset $K\subseteq\reals^n$,
and an arbitrary cone $C\subseteq\rx$.
\begin{prop}\label{KMP}
For a locally convex topology $\tau$ on $\rx$, a closed subset $K\subseteq\reals^n$ and
a cone $C\subseteq\rx$, the following are
equivalent:
\begin{enumerate}
    \item{$C_{\tau}^{\vee}\subseteq\Pos(K)_{\tau}^{\vee}$,}
    \item{$\Pos(K)\subseteq C_{\tau}^{\vee\vee}$,}
    \item{$\forall\ell\in C_{\tau}^{\vee}$ there exists a positive Borel measure on $K$ such that.
\[\forall f\in\rx~\ell(f)=\int_K f~d\mu.\]
    }
\end{enumerate}
\end{prop}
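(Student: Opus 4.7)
My plan is to establish the equivalences by a short cycle $(1) \Rightarrow (2) \Rightarrow (3) \Rightarrow (1)$, leveraging the definitions of first and second dual together with Haviland's theorem. Most of the work is unpacking notation; the only non-formal ingredient is Haviland.

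For $(1) \Rightarrow (2)$, I would take $f \in \Pos(K)$ and show $f \in C_{\tau}^{\vee\vee}$. By definition of the second dual, this means checking $\ell(f) \ge 0$ for every $\ell \in C_{\tau}^{\vee}$. Fix such an $\ell$. Hypothesis (1) places $\ell$ in $\Pos(K)_{\tau}^{\vee}$, i.e.\ $\ell \ge 0$ on $\Pos(K)$, and in particular $\ell(f) \ge 0$. This is a pure definition chase.

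For $(2) \Rightarrow (3)$, let $\ell \in C_{\tau}^{\vee}$. Since $\ell$ vanishes-to-nonnegatively on every element of $C_{\tau}^{\vee\vee}$, and since (2) says $\Pos(K) \subseteq C_{\tau}^{\vee\vee}$, we conclude $\ell \ge 0$ on $\Pos(K)$. But $\ell$ is a linear functional on $\rx$ nonnegative on $\Pos(K)$, so Haviland's Theorem \ref{haviland} supplies a positive regular Borel measure $\mu$ on $K$ with $\ell(f) = \int_K f \, d\mu$ for all $f \in \rx$. Note that $\tau$-continuity of $\ell$ plays no role here beyond identifying $\ell$ as an element of the algebraic dual of $\rx$; it is Haviland, not any topological density argument, that produces $\mu$.

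For $(3) \Rightarrow (1)$, take any $\ell \in C_{\tau}^{\vee}$ and apply (3) to obtain a positive Borel measure $\mu$ on $K$ representing $\ell$. Then for any $f \in \Pos(K)$ we have $\ell(f) = \int_K f \, d\mu \ge 0$ since $f \ge 0$ on $K$ and $\mu \ge 0$. Hence $\ell \in \Pos(K)_{\tau}^{\vee}$, giving the containment $C_{\tau}^{\vee} \subseteq \Pos(K)_{\tau}^{\vee}$.

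The main (and only) non-formal obstacle is the invocation of Haviland's theorem in the step $(2)\Rightarrow (3)$; everything else is a transparent unwinding of the duality notation introduced earlier. It is worth emphasising in the write-up that continuity of $\ell$ with respect to $\tau$ is essentially free information in these equivalences: the role of $\tau$ is confined to the definition of $C_{\tau}^{\vee}$, while the representing measure on $K$ is produced without reference to $\tau$. This is what makes the proposition useful as a bridge between locally convex closures of cones in $\rx$ and the classical moment problem.
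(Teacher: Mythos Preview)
Your proof is correct and follows essentially the same approach as the paper: the paper organizes the equivalences as $(1)\Leftrightarrow(2)$ via Proposition~\ref{dulprop}, then $(1)\Rightarrow(3)$ via Haviland (noting $\Pos(K)_{\tau}^{\vee}\subseteq\Pos(K)_{\varphi}^{\vee}$, which is exactly your remark that $\tau$-continuity plays no role beyond placing $\ell$ in the algebraic dual), and $(3)\Rightarrow(1)$ as clear. Your cycle $(1)\Rightarrow(2)\Rightarrow(3)\Rightarrow(1)$ is a trivially different bookkeeping of the same argument.
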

\begin{proof}
(1)$\Leftrightarrow$(2) is clear by Proposition \ref{dulprop}. For (1)$\Rightarrow$(3), note that
$\Pos(K)_{\tau}^{\vee}\subseteq\Pos(K)_{\varphi}^{\vee}$
then apply Haviland's Theorem \ref{haviland}. (3)$\Rightarrow$(1) is clear.
\end{proof}
\begin{rem}\label{toz} As explained in the introduction, given an arbitrary compact semialgebraic set $K_S$, 
to determine whether a linear functional comes from a measure on $K_S$, we need to check the $s+2$ conditions given in \eqref{archineq}.
In the next two subsection, we show that for certain compact convex
sets $K_S$ and functionals continuous in weighted norm $p$
topologies, we just need to check a single condition, namely
$\ell(h^2)\geq 0$ (see Corollary \ref{CtsPsd} and Theorems
\ref{WGLNK}, \ref{L1Case}, \ref{T:KMP-weight p-norm} and
\ref{T:KMP-weight p-norm2}).
\end{rem}
\subsection{Norm-$p$ Topologies}
We are interested in computing the closure of the cone $\sos$ in
$\rx$ under certain norm topologies. We start by reviewing some
basic facts about $\|\cdot\|_p$ norms (see \cite{Cnwy}). In what follows, $\naturals := \{0, 1,\cdots,\}$ the set of natural numbers, $\underline{X}
:=(X_1,\cdots, X_n)$, $\ux^{\alpha}:=X_1^{\alpha_1}\cdots
X_n^{\alpha_n}$ where $\alpha =(\alpha_1, \cdots, \alpha_n)$ and $|\alpha|=\alpha_1+\cdots+\alpha_n$.

Let $1\leq p<\infty$, and define the mapping $\|\cdot \|_p:\reals^{\naturals^n}\rightarrow\reals\cup\{\infty\}$ for each
$s:\naturals^n\rightarrow\reals$ with
$$\|s\|_p=(\sum_{\alpha\in\naturals^n}|s(\alpha)|^p)^{\frac{1}{p}}=
(\sum_{d=0}^{\infty}\sum_{|\alpha|=d}|s(\alpha)|^p)^{\frac{1}{p}}.$$
For $p=\infty$, define $\|s\|_{\infty}=\sup_{\alpha\in\naturals^n}|s(\alpha)|$.
For $1\leq p\leq\infty$, we let
\[
    \ell_p(\naturals^n)=\{s\in\reals^{\naturals^n}:\|s\|_p<\infty\},
\]
and
\[
    c_0(\naturals^n)=\{s\in\reals^{\naturals^n}:\lim_{\alpha\in\naturals^n}|s(\alpha)|=0\}.
\]
It is well-known that $\|\cdot\|_p$ is a norm on
$\ell_p(\naturals^n)$ and $(\ell_p(\naturals^n),\|\cdot\|_p)$ forms
a Banach space. Moreover, if $1\leq p<q\leq\infty$ then
$\ell_p(\naturals^n)\subsetneq\ell_q(\naturals^n)$. Let $V_p$ be the
set of all finite support real $n$-sequences, equipped with
$\|\cdot\|_p$. Fixing the monomial basis
$\{\ux^{\alpha}:\alpha\in\naturals^n\}$, we identify the space of
real polynomials $\rx$, endowed by $\|\cdot\|_p$-norm, with $V_p$.

For $1\leq p \leq \infty$, define the conjugate $q$ of $p$ as
follows:
\begin{itemize}
    \item{If $p=1$, let $q=\infty$},
    \item{If $p=\infty$, let $q=1$},
    \item{if $1<p< \infty$, let $q$ be the real number satisfying $\frac{1}{p}+\frac{1}{q}=1$.}
\end{itemize}
For a proof of Proposition \ref{denseness}, Lemma \ref{idmap} and Lemma \ref{HOLDER} see \cite{Cnwy} or \cite{RFA}.
\begin{prop}\label{denseness}
For $1\leq p<\infty$, $V_p$ is a dense subspace of $\ell_p(\naturals^n)$, and $V_{\infty}$ is dense in $c_0(\naturals^n)$.
A linear functional $\ell$ on $V_p$ is continuous if and only if $\|(\ell(\ux^{\alpha}))_{\alpha\in\naturals^n}\|_q<\infty$ where $q$
is the conjugate of $p$.
\end{prop}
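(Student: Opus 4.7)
The plan is to prove the two assertions independently. Density of $V_p$ is a routine truncation argument; the characterisation of continuity is the classical identification $V_p^{*}\cong \ell_q(\naturals^n)$ via H\"older duality, relabelled through the monomial basis $\{\ux^\alpha\}$. Because $\naturals^n$ is countable, no real-algebraic input is required: everything reduces to the standard treatment of $\ell_p$.

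For density, I would enumerate $\naturals^n$ as $\alpha_1,\alpha_2,\ldots$ and let $s_N\in V_p$ be the truncation of $s$ to $\{\alpha_1,\ldots,\alpha_N\}$. For $1\le p<\infty$, $\|s-s_N\|_p^{p}$ is the tail of the convergent nonnegative series $\sum_\alpha|s(\alpha)|^{p}$ and therefore tends to $0$. For $p=\infty$ with $s\in c_0(\naturals^n)$, for any given $\epsilon>0$ only finitely many $\alpha$ satisfy $|s(\alpha)|\ge\epsilon$; zeroing out $s$ outside that finite set produces an element of $V_\infty$ within $\epsilon$ of $s$ in sup norm.

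For the continuity characterisation, write $a_\alpha:=\ell(\ux^\alpha)$. The ``if'' direction is immediate from H\"older's inequality on finite sums: for $f=\sum_\alpha c_\alpha\ux^\alpha\in V_p$, $|\ell(f)|\le\sum_\alpha|c_\alpha||a_\alpha|\le\|f\|_p\cdot\|(a_\alpha)\|_q$, so $\ell$ is bounded and hence continuous (using the earlier remark equating boundedness and continuity in normed spaces).

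Conversely, assuming $\ell$ is continuous --- equivalently bounded, with $|\ell(f)|\le N\|f\|_p$ for some $N\ge 0$ --- I would split into three cases. If $p=1$, plugging in $f=\ux^\alpha$ gives $|a_\alpha|\le N$ for every $\alpha$, so $\|(a_\alpha)\|_\infty\le N$. If $p=\infty$, for any finite $F\subseteq\naturals^n$ I take $c_\alpha=\sgn(a_\alpha)$ on $F$ and $0$ elsewhere; then $\|f\|_\infty\le 1$ and $\ell(f)=\sum_{\alpha\in F}|a_\alpha|\le N$, so letting $F$ exhaust $\naturals^n$ yields $\|(a_\alpha)\|_1\le N$. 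If $1<p<\infty$, the standard trick is $c_\alpha=\sgn(a_\alpha)|a_\alpha|^{q-1}$ on $F$ (zero outside): then $\|f\|_p^{p}=\sum_F|a_\alpha|^{(q-1)p}=\sum_F|a_\alpha|^{q}$ since $(q-1)p=q$, and $\ell(f)=\sum_F|a_\alpha|^{q}$, whence $\sum_F|a_\alpha|^{q}\le N\bigl(\sum_F|a_\alpha|^{q}\bigr)^{1/p}$; dividing through and using $1-1/p=1/q$ gives $\bigl(\sum_F|a_\alpha|^{q}\bigr)^{1/q}\le N$, and taking $F\uparrow\naturals^n$ delivers $\|(a_\alpha)\|_q\le N$. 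No step presents a genuine obstacle; the only subtlety worth flagging is the conjugate-exponent identity $(q-1)p=q$ in the last case, where an arithmetic slip could otherwise creep in.
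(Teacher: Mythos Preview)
Your argument is correct and is precisely the standard proof one finds in the references the paper cites: the paper itself does not supply a proof of this proposition but simply refers the reader to Conway \cite{Cnwy} or Rudin \cite{RFA}. Your truncation argument for density and the H\"older-duality computation for the continuity characterisation (including the conjugate-exponent identity $(q-1)p=q$) are exactly the textbook route, so there is nothing to compare.
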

\begin{lemma}\label{idmap}
For $1\leq p\leq q\leq\infty$, the identity map $id_{pq}:V_p\rightarrow V_q$ is continuous, i.e., $\|\cdot\|_p$ induces a
finer topology than $\|\cdot\|_q$ on $\rx$.
Therefore, for any $C\subseteq\rx$ we have $\overline{C}^{\|.\|_p}\subseteq\overline{C}^{\|.\|_q}$.
\end{lemma}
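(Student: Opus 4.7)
The goal is to establish a single inequality, namely $\|s\|_q \le \|s\|_p$ for every $s \in V_p$, from which both the continuity of $id_{pq}$ and the containment of closures follow at once: continuity is just boundedness with constant $1$, and if $c_n \to f$ in $\|\cdot\|_p$ then $\|c_n - f\|_q \le \|c_n - f\|_p \to 0$, giving $\overline{C}^{\|\cdot\|_p} \subseteq \overline{C}^{\|\cdot\|_q}$.

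To prove the norm inequality, I would dispose of the trivial case $s=0$ and then, using homogeneity of both norms, reduce to the normalization $\|s\|_p = 1$. In this case every coordinate satisfies $|s(\alpha)| \le 1$. For finite $q$, since $q \ge p$ and $|s(\alpha)| \le 1$, one has the pointwise bound $|s(\alpha)|^q \le |s(\alpha)|^p$; summing over $\alpha$ (a finite sum because $s \in V_p$ has finite support) gives $\|s\|_q^q \le \|s\|_p^p = 1$, hence $\|s\|_q \le 1 = \|s\|_p$. For $q=\infty$, it is immediate that $\sup_\alpha |s(\alpha)| \le \bigl(\sum_\alpha |s(\alpha)|^p\bigr)^{1/p}$, so $\|s\|_\infty \le \|s\|_p$ without any normalization trick.

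The closure statement is then a direct consequence: by the remark recalled in Section~\ref{SecPre}, a bounded linear map between normed spaces is continuous, so $id_{pq}$ is continuous, meaning that every $\|\cdot\|_p$-convergent sequence is $\|\cdot\|_q$-convergent to the same limit. Applied to sequences in $C$, this yields the inclusion of closures.

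There is really no hard step here; the only mild subtlety is the case split at $q=\infty$, since the definition of $\|\cdot\|_\infty$ as a supremum is not uniform with the $\ell_p$-expression, so one needs to verify it separately. Everything else is an elementary consequence of $0 \le |s(\alpha)| \le 1 \Rightarrow |s(\alpha)|^q \le |s(\alpha)|^p$ for $p \le q$, together with the fact that $s \in V_p$ has only finitely many nonzero entries so all sums involved are automatically finite.
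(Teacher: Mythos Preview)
Your proof is correct and follows exactly the standard argument the paper has in mind: normalize to $\|s\|_p=1$, use $|s(\alpha)|\le 1$ to deduce $|s(\alpha)|^q\le|s(\alpha)|^p$, sum, and conclude $\|s\|_q\le 1$; the closure inclusion then follows from boundedness of $id_{pq}$. The published version simply refers to \cite{Cnwy} and \cite{RFA}, but the authors' draft proof (present in the source as a commented-out block) is essentially identical to yours, except that you treat the case $q=\infty$ separately with slightly more care.
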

\begin{lemma}\label{HOLDER}
(H\"{o}lder's inequality) Let $1\leq p\leq \infty$ and let $q$ be the conjugate of $p$. Let
$a\in \ell_p(\naturals^n)$ and $b\in \ell_q(\naturals^n)$. Then $ab\in \ell_1(\naturals^n)$ and
$\|ab\|_1\leq\|a\|_p\|b\|_q$,
where we define $(ab)(\alpha):=a(\alpha) b(\alpha)$ for every
$\alpha \in \naturals^n$.
\end{lemma}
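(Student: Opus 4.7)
The plan is to follow the classical route to H\"older's inequality: reduce via homogeneity to the unit-norm case, apply Young's pointwise inequality, and sum. Since the index set $\naturals^n$ is countable, the sums involved present no measure-theoretic subtlety beyond what occurs for $\ell_p$ on $\naturals$.

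First I would dispose of the boundary cases. If $p=1$ (so $q=\infty$), then for every $\alpha \in \naturals^n$ we have $|a(\alpha) b(\alpha)| \leq |a(\alpha)| \cdot \|b\|_\infty$, and summing yields $\|ab\|_1 \leq \|a\|_1 \|b\|_\infty$. The case $p=\infty$, $q=1$ is symmetric. Also, if $\|a\|_p = 0$ or $\|b\|_q = 0$ then $a$ or $b$ is identically zero, so $ab \equiv 0$ and the inequality is trivial.

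For the main case $1<p<\infty$ with $\frac{1}{p}+\frac{1}{q}=1$, I would first record Young's inequality: for all $x,y \geq 0$,
\[
xy \leq \frac{x^p}{p} + \frac{y^q}{q}.
\]
This follows from the concavity of $\log$ applied to the convex combination $\frac{1}{p}(x^p) + \frac{1}{q}(y^q)$, or equivalently from the convexity of $\exp$: write $xy = \exp(\frac{1}{p}\log x^p + \frac{1}{q}\log y^q) \leq \frac{1}{p}x^p + \frac{1}{q}y^q$ when $x,y>0$, with the case $xy=0$ being obvious.

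Next I would normalize. Assuming $\|a\|_p, \|b\|_q > 0$, set $\tilde a := a/\|a\|_p$ and $\tilde b := b/\|b\|_q$, so that $\|\tilde a\|_p = \|\tilde b\|_q = 1$. Applying Young's inequality with $x = |\tilde a(\alpha)|$ and $y = |\tilde b(\alpha)|$ gives
\[
|\tilde a(\alpha) \tilde b(\alpha)| \leq \frac{|\tilde a(\alpha)|^p}{p} + \frac{|\tilde b(\alpha)|^q}{q}
\]
for each $\alpha \in \naturals^n$. Summing over $\alpha$ (each term nonnegative, so the sum is unambiguously defined in $[0,\infty]$, and finite by hypothesis) yields
\[
\|\tilde a \tilde b\|_1 \leq \frac{\|\tilde a\|_p^p}{p} + \frac{\|\tilde b\|_q^q}{q} = \frac{1}{p} + \frac{1}{q} = 1.
\]
In particular $\tilde a \tilde b \in \ell_1(\naturals^n)$, hence $ab \in \ell_1(\naturals^n)$, and multiplying through by $\|a\|_p \|b\|_q$ gives $\|ab\|_1 \leq \|a\|_p \|b\|_q$.

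The only mildly delicate point is justifying Young's inequality; everything else is bookkeeping with absolutely convergent series on the countable set $\naturals^n$. Since the summands are all nonnegative, no reordering or Fubini-type argument is needed, and the proof transfers verbatim from the classical $\naturals$ version.
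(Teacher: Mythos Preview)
Your proof is correct and is the standard classical argument via Young's inequality and normalization. The paper does not actually prove this lemma; it simply refers the reader to \cite{Cnwy} or \cite{RFA}, and the argument you have written is precisely the one found in such references.
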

In \cite[Theorem 9.1]{BCR}, Berg, Christensen and Ressel showed that
the closure of $\sos$ in the $\|\cdot\|_1$-topology is
$\Pos([-1,1]^n)$. The proof given by Berg, Christensen and Ressel in
 \cite{BCR,BCRBK} is based on techniques from harmonic analysis on semigroups, whereas in \cite{JLN},
 Lasserre and Netzer produce a concrete sequence in $\sos$ converging to each $f\in\Pos([-1,1]^n)$ in $\|\cdot\|_1$. The above result of Berg \textit{et al.} easily extends to all $\|\cdot\|_p$-topologies, see Theorem \ref{GLNK} below. We first need the following.
\begin{prop}\label{evalcts}
Let $1\leq p\leq\infty$ and $\underline{x}\in\reals^n$, and let $e_{\underline{x}}:V_p\rightarrow\reals$ be the evaluation
homomorphism on $V_p$ defined by $e_{\underline{x}}(f):= f(\underline{x})$.
Then the following statements are equivalent:
\begin{enumerate}
	\item{$e_{\underline{x}}$ is continuous.}
	\item{$\|(\underline{x}^{\alpha})_{\alpha\in\naturals^n}\|_q<\infty$, where $q$ is the conjugate of $p$.}
	\item{$\underline{x}\in (-1,1)^n$ if $1\leq p<\infty$, and $\underline{x}\in [-1,1]^n$ if $p=\infty$.}
\end{enumerate}
\end{prop}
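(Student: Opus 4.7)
The plan is to split the three-way equivalence into (1)$\Leftrightarrow$(2), which will be essentially a restatement of Proposition \ref{denseness} applied to the specific functional $e_{\underline{x}}$, and (2)$\Leftrightarrow$(3), which reduces to computing the $\|\cdot\|_q$-norm of the sequence $(\underline{x}^\alpha)_{\alpha\in\naturals^n}$ explicitly as a product of one-variable sums, followed by a case split on whether $q$ is finite.

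For (1)$\Leftrightarrow$(2), I would observe that under the monomial-basis identification of $\rx$ with $V_p$, the evaluation $e_{\underline{x}}$ is a linear functional whose value on the basis element $\underline{X}^\alpha$ is precisely $\underline{x}^\alpha$, so its ``coefficient sequence'' is exactly $(\underline{x}^\alpha)_\alpha$. Proposition \ref{denseness} then immediately yields that $e_{\underline{x}}$ is continuous on $V_p$ if and only if $\|(\underline{x}^\alpha)_\alpha\|_q<\infty$, which is precisely (2).

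For (2)$\Leftrightarrow$(3), the key observation is the multiplicative decomposition $|\underline{x}^\alpha| = \prod_{i=1}^n |x_i|^{\alpha_i}$, which separates the $n$ variables. When $q$ is finite, I would apply Tonelli to rewrite
\[
\|(\underline{x}^\alpha)\|_q^q = \sum_{\alpha \in \naturals^n} \prod_{i=1}^n |x_i|^{q \alpha_i} = \prod_{i=1}^n \sum_{k=0}^{\infty} |x_i|^{q k},
\]
a product of one-variable geometric series; the finiteness of each factor is equivalent to $|x_i|<1$. When $q=\infty$, I would instead compute $\|(\underline{x}^\alpha)\|_\infty = \sup_\alpha \prod_i |x_i|^{\alpha_i}$, and observe that testing along the sequence $\alpha = N e_i$ as $N\to\infty$ forces $|x_i|\le 1$, while the converse is immediate since each factor is then bounded by $1$. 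Translating back through the conjugacy of $p$ and $q$ yields (3) as stated.

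The argument is otherwise routine; the main point requiring care is the case split between $q<\infty$ and $q=\infty$, since this is precisely what governs strict versus non-strict inequality at the boundary, and thus which of the open or closed cube appears in (3).
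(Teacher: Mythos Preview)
Your plan is sound and, for (2)$\Leftrightarrow$(3), essentially identical to the paper's: both factor the $q$-norm of $(\underline{x}^\alpha)$ as a product of one-variable geometric series and split on whether the exponent is finite. For (1)$\Leftrightarrow$(2) you take a shorter route than the paper by invoking Proposition~\ref{denseness} directly; the paper instead reproves both directions by hand, using H\"older's inequality for (2)$\Rightarrow$(1) and, for (1)$\Rightarrow$(2), the identification $\ell_p^*=\ell_q$ when $p<\infty$ together with an explicit construction (the sequences $f_k$, $g_k$) when $p=\infty$. Your shortcut is legitimate, since Proposition~\ref{denseness} already packages exactly this duality statement; the paper's longer argument is just an unpacking of it.

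One caution: your closing line ``translating back through the conjugacy of $p$ and $q$ yields (3) as stated'' should be checked rather than asserted. Your own computation gives the open cube when $q<\infty$ (i.e.\ $1<p\le\infty$) and the closed cube when $q=\infty$ (i.e.\ $p=1$), whereas (3) as printed assigns the open cube to $1\le p<\infty$ and the closed cube to $p=\infty$. These disagree exactly at the endpoints $p=1$ and $p=\infty$. The paper's own proof in fact agrees with your computation---its $p=\infty$ argument explicitly shows discontinuity whenever $\underline{x}\notin(-1,1)^n$---so the mismatch is a misprint in the statement of (3) rather than a flaw in your reasoning; but you should not gloss over it.
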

\begin{proof}
(2)$\Longleftrightarrow$(3) First assume that $1\leq p<\infty$. Let $\underline{x}=(x_1,\ldots,x_n) \in\reals^n$. Then
\[
    \begin{array}{rl}
        (\|(\underline{x}^{\alpha})_{\alpha\in\naturals^n}\|_p)^p & =\sum_{\alpha\in\naturals^n}|\underline{x}^{\alpha}|^{p}\\
         & =\sum_{\alpha_1,\cdots,\alpha_n=0}^{\infty}|x_1|^{p\alpha_1}\ldots|x_n|^{p\alpha_n}\\
         & =(\sum_{\alpha_1=0}^{\infty}|x_1|^{p\alpha_1})\cdots(\sum_{\alpha_n=0}^{\infty}|x_n|^{p\alpha_n}),\\
    \end{array}
\]
a product of geometric series. It follows that $\|(\underline{x}^{\alpha})_{\alpha\in\naturals^n}\|_p<\infty$ if and only if $|x_i|<1$ for $i=1,\ldots n$.
For $p=\infty$, $\|(\underline{x}^{\alpha})_{\alpha\in\naturals^n}\|_{\infty}=\sup_{\alpha\in\naturals^n}|\underline{x}^{\alpha}|$ is finite
if and only if  $|x_i|\leq 1$, for each $1\leq i\leq n$.

(2)$\Longrightarrow$(1) First suppose that $1\leq p<\infty$. Assuming $f(\ux)=\sum_{\alpha\in\naturals^n}f_{\alpha}\ux^{\alpha}$,
\[\begin{array}{rl}
\|e_{\underline{x}}\|=\sup_{\|f\|_p=1}|f(\underline{x})| & =\sup_{\|f\|_p=1}|\sum_{\alpha\in\naturals^n}f_{\alpha}\underline{x}^{\alpha}|\\
 & \leq\sup_{\|f\|_p=1}\sum|f_{\alpha}||\underline{x}^{\alpha}|\\
 \textrm{(By H\"{o}lder's inequality)} & \leq\sup_{\|f\|_p=1}\|f\|_p\|(\underline{x}^{\alpha})_{\alpha\in\naturals^n}\|_q\\
  & =\sup_{\|f\|_p=1}\|(\underline{x}^{\alpha})_{\alpha\in\naturals^n}\|_q\\
  & =\|(\underline{x}^{\alpha})_{\alpha\in\naturals^n}\|_q.
\end{array}\]
Therefore if $\|(\underline{x}^{\alpha})_{\alpha\in\naturals^n}\|_q<\infty$, the
$e_{\underline{x}}$ is continuous.

\noindent For $p=\infty$,
\[\begin{array}{rl}
 \|e_{\underline{x}}\|=\sup_{\|f\|_{\infty}=1}|f(\underline{x})| & =\sup_{\|f\|_{\infty}=1}|\sum_{\alpha\in\naturals^n}f_{\alpha}\underline{x}^{\alpha}|\\
 & \leq\sum_{\alpha\in\naturals^n}|f_{\alpha}|\cdot|\underline{x}^{\alpha}|\\
 & \leq\sum_{\alpha\in\naturals^n}|\underline{x}^{\alpha}|\\
 & =\|(\underline{x}^{\alpha})_{\alpha\in\naturals^n}\|_1,
\end{array}\]
So, if $\|(\underline{x}^{\alpha})_{\alpha\in\naturals^n}\|_1<\infty$, then $e_{\underline{x}}$ is continuous.\\
(1)$\Longrightarrow$(2) First consider the case where $1\leq
p<\infty$. Suppose that $e_{\underline{x}}$ is continuous on $V_p$.
By Proposition \ref{denseness}, $V_p$ is a dense subspace of
$(\ell_p(\naturals^n),\|\cdot\|_p)$ which is a Banach space.
Therefore $e_{\underline{x}}$ has a continuous extension to
$(\ell_p(\naturals^n),\|\cdot\|_p)$ denoted again by
$e_{\underline{x}}$. Using the fact that
$\ell_p(\naturals^n)^*=\ell_q(\naturals^n)$, continuity of
$e_{\underline{x}}$ is implies that
$\|(\underline{x}^{\alpha})_{\alpha\in\naturals^n}\|_q<\infty$.

Now suppose that $p=\infty$ and
$\|(\underline{x}^{\alpha})_{\alpha\in\naturals^n}\|_1=\infty$.
Then, by part (3), for some $1\leq i\leq n$, $|x_i|\geq1$. For any
$k\in\naturals$, $k\ge1$, let
$f_k(\underline{X})=\frac{1}{k}(1+X_i+X_i^2+\cdots+X_i^k)$ and
$g_k(\underline{X})=\frac{1}{k}(1-X_i+X_i^2-\cdots+(-X_i)^k)$.
Clearly $f_k,g_k\rightarrow0$ in $\|\cdot\|_{\infty}$, but
\[
\begin{array}{l}
    |e_{\underline{x}}(f_k)|\geq\frac{k+1}{k},\quad\textrm{ if }x_i\ge1,\\
    |e_{\underline{x}}(g_k)|\geq\frac{k+1}{k},\quad\textrm{ if }x_i\leq-1,
\end{array}
\]
Therefore in either cases at least one of $(e_{\underline{x}}(f_k))$ or $(e_{\underline{x}}(g_k))$ does not converge to $0$. Hence, for
$\underline{x}\not\in(-1,1)^n$, $e_{\underline{x}}$ is not continuous. This proves the result for $p=\infty$.
\end{proof}
\begin{crl}\label{closeness}
For $1\leq p\leq\infty$, $\Pos([-1,1]^n)$ is a closed subset of $V_p$.
\end{crl}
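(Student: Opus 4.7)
The plan is to reduce to the coarsest norm topology, namely $\|\cdot\|_\infty$, and then show $\Pos([-1,1]^n)$ is closed there via continuity of point evaluations.

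First I would invoke Lemma \ref{idmap}: for $1\le p\le \infty$, the identity $\mathrm{id}_{p\infty}\colon V_p\to V_\infty$ is continuous. Hence any subset of $\rx$ that is closed in $V_\infty$ is automatically closed in $V_p$ (the preimage under the identity of a closed set is closed). So it suffices to prove the statement in the case $p=\infty$.

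Now I would observe that, by definition of $\Pos([-1,1]^n)$,
\[
  \Pos([-1,1]^n)\;=\;\bigcap_{\underline{x}\in[-1,1]^n}\{f\in\rx:e_{\underline{x}}(f)\ge 0\}\;=\;\bigcap_{\underline{x}\in[-1,1]^n}e_{\underline{x}}^{-1}\bigl([0,\infty)\bigr).
\]
By the implication (3)$\Rightarrow$(1) of Proposition \ref{evalcts}, for every $\underline{x}\in[-1,1]^n$ the evaluation map $e_{\underline{x}}\colon V_\infty\to\reals$ is continuous. Therefore each set $e_{\underline{x}}^{-1}([0,\infty))$ is closed in $V_\infty$, and an intersection of closed sets is closed. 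This shows $\Pos([-1,1]^n)$ is closed in $V_\infty$, and combining with the first paragraph gives the result for every $1\le p\le\infty$.

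I do not expect any real obstacle: the only subtlety is the direction of the reduction (a finer topology has \emph{more} closed sets, so closedness passes from $V_\infty$ to $V_p$, not the other way around), and this is exactly what Lemma \ref{idmap} provides. It is crucial that we reduce to $p=\infty$, because for $1\le p<\infty$ the evaluation $e_{\underline{x}}$ at a boundary point $\underline{x}\in[-1,1]^n\setminus(-1,1)^n$ fails to be $\|\cdot\|_p$-continuous by Proposition \ref{evalcts}, so the direct ``intersection of preimages'' argument cannot be run inside $V_p$ itself.
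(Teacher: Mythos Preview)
Your argument is correct. The reduction via Lemma \ref{idmap} is sound (closedness in the coarser topology $\|\cdot\|_\infty$ transfers to every finer $\|\cdot\|_p$), and for $p=\infty$ the evaluations $e_{\underline{x}}$ with $\underline{x}\in[-1,1]^n$ are continuous by Proposition \ref{evalcts}, so the intersection-of-preimages argument goes through.

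However, the paper proceeds differently, and the difference is worth noting. Instead of reducing to $p=\infty$, the paper observes that
\[
\Pos([-1,1]^n)=\Pos((-1,1)^n)=\bigcap_{\underline{x}\in(-1,1)^n}e_{\underline{x}}^{-1}([0,\infty)),
\]
the first equality holding because a polynomial nonnegative on a dense subset of $[-1,1]^n$ is nonnegative on all of it. With this in hand, only evaluations at \emph{interior} points are needed, and by Proposition \ref{evalcts} these are $\|\cdot\|_p$-continuous for \emph{every} $1\le p\le\infty$. So the ``intersection of preimages'' argument can, contrary to what you say in your last paragraph, be run directly inside $V_p$ for each $p$; the trick is simply to drop the boundary points from the index set. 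Your route trades this density observation for the topology comparison of Lemma \ref{idmap}; the paper's route is more uniform in $p$ and avoids the detour through $V_\infty$. Both are short and valid.
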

\begin{proof}
We first note that
\[
    \Pos([-1,1]^n)=\Pos((-1,1)^n)=\bigcap_{\underline{x}\in (-1,1)^n} e_{\underline{x}}^{-1}([0,+\infty)).
\]
However, by Proposition \ref{evalcts}(iii), for every $\underline{x}\in (-1,1)^n$, $e_{\underline{x}}$
is continuous on $V_p$. Hence the result follows.
\end{proof}
\begin{thm}\label{GLNK}
For $1\leq p \leq \infty$, $\overline{\sos}^{\|\cdot\|_p}=\Pos([-1,1]^n)$.
\end{thm}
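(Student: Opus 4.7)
The plan is to prove the two inclusions of the claimed equality separately, relying on results already available in the paper and in the cited literature.

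For the inclusion $\overline{\sos}^{\|\cdot\|_p}\subseteq\Pos([-1,1]^n)$, I would argue as follows. Every sum of squares is nonnegative on all of $\reals^n$, so $\sos\subseteq\Pos(\reals^n)\subseteq\Pos([-1,1]^n)$. By Corollary \ref{closeness}, $\Pos([-1,1]^n)$ is a closed subset of $V_p$ for every $1\le p\le\infty$, so taking $\|\cdot\|_p$-closures preserves the inclusion and yields $\overline{\sos}^{\|\cdot\|_p}\subseteq\Pos([-1,1]^n)$. This direction is essentially free.

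For the reverse inclusion $\Pos([-1,1]^n)\subseteq\overline{\sos}^{\|\cdot\|_p}$, I would invoke the theorem of Berg, Christensen and Ressel mentioned just before the statement, namely $\overline{\sos}^{\|\cdot\|_1}=\Pos([-1,1]^n)$, which handles the case $p=1$ outright. For $1<p\le\infty$, I would combine this with Lemma \ref{idmap}: applied with $p=1$ and $q=p$, the lemma gives $\overline{\sos}^{\|\cdot\|_1}\subseteq\overline{\sos}^{\|\cdot\|_p}$, and therefore $\Pos([-1,1]^n)=\overline{\sos}^{\|\cdot\|_1}\subseteq\overline{\sos}^{\|\cdot\|_p}$. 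Combining both inclusions yields the claimed equality.

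There is no genuine obstacle here: the only nontrivial analytical content, the construction of $\|\cdot\|_1$-approximations of elements of $\Pos([-1,1]^n)$ by sums of squares, is already done in \cite{BCR, BCRBK} (and made explicit in \cite{JLN}). The contribution at this stage is simply the observation that the $\|\cdot\|_1$ topology is the finest among the $\|\cdot\|_p$ topologies on $\rx$, so the hardest instance of the approximation result automatically implies all the others.
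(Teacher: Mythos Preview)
Your proof is correct and follows essentially the same route as the paper's own argument: one inclusion via Corollary~\ref{closeness}, the other via the Berg--Christensen--Ressel result \cite[Theorem~9.1]{BCR} for $p=1$ combined with Lemma~\ref{idmap} to pass to general $p$. The paper's proof is just a two-line summary of exactly this reasoning.
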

\begin{proof}
Corollary \ref{closeness} implies that $\overline{\sos}^{\|\cdot\|_p}\subseteq \Pos([-1,1]^n)$.
The other inclusion follows from Lemma \ref{idmap} and \cite[Theorem 9.1]{BCR}.
\end{proof}
\begin{dfn}
Let $\ell : \rx\rightarrow\reals$ be a linear functional. We say that $\ell$ is \textit{positive
semidefinite} if $\ell(h^2)\ge0$ for every $h\in\rx$.
\end{dfn}

Proposition \ref{KMP}, Corollary \ref{duality} and Theorem \ref{GLNK} have an important consequence for the moment problem:

\begin{crl}\label{CtsPsd} Let $1\leq p\leq\infty$, and let
$\ell:\rx\rightarrow\reals$ be a linear functional on $\rx$ such
that $\|(\ell(\ux^{\alpha}))_{\alpha\in\naturals^n}\|_q<\infty$
where $q$ is the conjugate of $p$. If $\ell$ is positive
semidefinite, then there exists a positive Borel measure $\mu$ on
$[-1,1]^n$ such that $\forall f\in\rx\quad\ell(f)=\int_{[-1,1]^n}f~d\mu$.
\end{crl}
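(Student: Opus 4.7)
The plan is to assemble the three cited results and verify that the hypotheses of Proposition \ref{KMP} apply to the triple $(\tau,K,C) = (\|\cdot\|_p,[-1,1]^n,\sos)$; then the conclusion follows by reading off condition (3) of that proposition. No single step is genuinely hard, since the heavy lifting (identification of the $\|\cdot\|_p$-closure of $\sos$) has already been done in Theorem \ref{GLNK}; the corollary is essentially a dictionary translation.

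First I would translate the two hypotheses on $\ell$ into the statement $\ell\in(\sos)_{\|\cdot\|_p}^{\vee}$. By Proposition \ref{denseness}, the condition $\|(\ell(\ux^{\alpha}))_{\alpha\in\naturals^n}\|_q<\infty$ is equivalent to $\ell$ being a continuous linear functional on $(V_p,\|\cdot\|_p)$, that is, $\ell\in V_p^{*}$. The positive semidefiniteness hypothesis $\ell(h^2)\geq 0$ for every $h\in\rx$ is exactly the statement that $\ell$ is nonnegative on the cone $\sos$. Together these say $\ell\in(\sos)_{\|\cdot\|_p}^{\vee}$.

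Next I would verify condition (2) of Proposition \ref{KMP}, namely
\[
\Pos([-1,1]^n)\subseteq (\sos)_{\|\cdot\|_p}^{\vee\vee}.
\]
By Theorem \ref{GLNK}, $\overline{\sos}^{\|\cdot\|_p}=\Pos([-1,1]^n)$, and since $\sos$ is a cone in $V_p$, Corollary \ref{duality} (the Duality corollary for locally convex topological vector spaces) gives $\overline{\sos}^{\|\cdot\|_p}=(\sos)_{\|\cdot\|_p}^{\vee\vee}$. Chaining the two equalities yields the required inclusion (in fact an equality).

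Finally I would invoke Proposition \ref{KMP} with $\tau$ the $\|\cdot\|_p$-topology, $K=[-1,1]^n$, and $C=\sos$: since condition (2) holds and $\ell\in C_{\tau}^{\vee}$, condition (3) produces a positive Borel measure $\mu$ on $[-1,1]^n$ with $\ell(f)=\int_{[-1,1]^n}f\,d\mu$ for all $f\in\rx$. The only thing one must check is that $\|\cdot\|_p$ genuinely defines a locally convex topology on $\rx=V_p$, which is immediate since it is a norm; so Proposition \ref{KMP} applies. The main ``obstacle,'' such as it is, is simply keeping track of which of the equivalent conditions in Proposition \ref{KMP} are used in which direction, and confirming that the characterization of $\|\cdot\|_p$-continuous functionals via the $\ell_q$-norm of their moment sequence (Proposition \ref{denseness}) matches the hypothesis exactly.
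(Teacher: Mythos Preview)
Your proposal is correct and follows exactly the route the paper indicates: the paper states the corollary as an immediate consequence of Proposition~\ref{KMP}, Corollary~\ref{duality}, and Theorem~\ref{GLNK}, and you have simply unpacked this chain (together with Proposition~\ref{denseness} to translate the $\ell_q$-condition into $\|\cdot\|_p$-continuity). Nothing is missing and nothing is done differently.
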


\subsection{Weighted Norm-$p$ Topologies}
We further extend Theorem \ref{GLNK} to
{\it weighted norm $p$-topologies}.
Let $r=(r_1,\ldots,r_n)$ be a $n$-tuple of positive real numbers and $1\leq p < \infty$. The vector space
\[\ell_{p,r}(\naturals^n):=\{s\in\reals^{\naturals^n}:\sum_{\alpha\in\naturals^n}|s(\alpha)|^p
r_1^{\alpha_1}\ldots r_n^{\alpha_n} <\infty\}\]
is a Banach space with respect to the norm
$\|s\|_{p,r}=(\sum_{\alpha\in\naturals^n}|s(\alpha)|^p r_1^{\alpha_1}\ldots r_n^{\alpha_n} )^{\frac{1}{p}}$.
Similarly,
\[\ell_{\infty,r} (\naturals^n):=\{s\in\reals^{\naturals^n}:\sup_{\alpha\in\naturals^n}|s(\alpha)|
r_1^{\alpha_1}\ldots r_n^{\alpha_n} <\infty\}\]
is a Banach space with respect to the norm
$\|s\|_{\infty,r}=\sup_{\alpha\in\naturals^n}|s(\alpha)| r_1^{\alpha_1}\ldots r_n^{\alpha_n}$.
Set
$$c_{0,r}(\naturals^n):=\{ s\in\reals^{\naturals^n}:\lim_{\alpha\in\naturals^n}|s(\alpha)|
r_1^{\alpha_1}\ldots r_n^{\alpha_n}=0 \}.$$ Then $c_{0,r}(\naturals^n)$ is a closed
subspace of $\ell_{\infty,r} (\naturals^n)$ with respect to the norm
$\|\cdot \|_{\infty,r}$.
Denote by $V_{p,r}$ the set of all finite support real
$n$-sequences (which we naturally identify with $\rx$), equipped with $\|\cdot\|_{p,r}$. Analogous to Proposition \ref{denseness}, the
completion of $V_{p,r}$ is $\ell_{p,r}(\naturals^n)$ when
$1\leq p < \infty$ and $c_{0,r}(\naturals^n)$ when $p=\infty$.
We now determine the continuous linear
functionals on $\ell_{p,r}(\naturals^n)$.
\begin{lemma}\label{L:dual-weighted}
Let $1\leq p \leq \infty$, and $q$ be the conjugate of $p$. \\Then $\ell_{1,r}(\naturals^n)^*=\ell_{\infty,r^{-1}}(\naturals^n)$, and $c_{0,r}(\naturals^n)^*=\ell_{1,r^{-1}}(\naturals^n)$.\\ For $1<p<\infty$, $\ell_{p,r}(\naturals^n)^*=\ell_{q,r^{-\frac{q}{p}}}(\naturals^n)$.
\end{lemma}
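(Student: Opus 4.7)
The plan is to reduce each case to the classical (unweighted) duality results $\ell_p(\naturals^n)^*=\ell_q(\naturals^n)$ for $1\le p<\infty$ and $c_0(\naturals^n)^*=\ell_1(\naturals^n)$, by transporting through an isometric isomorphism that absorbs the weight.

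First I would, for $1\le p<\infty$, introduce the ``weight-removing'' map
\[
U_p:\ell_{p,r}(\naturals^n)\longrightarrow \ell_p(\naturals^n),\qquad (U_p s)(\alpha):=s(\alpha)\,r_1^{\alpha_1/p}\cdots r_n^{\alpha_n/p},
\]
and for $p=\infty$ the map $(U_\infty s)(\alpha):=s(\alpha)\,r_1^{\alpha_1}\cdots r_n^{\alpha_n}$, mapping $\ell_{\infty,r}(\naturals^n)$ to $\ell_\infty(\naturals^n)$. A direct computation of norms shows each $U_p$ is a surjective linear isometry; moreover $U_\infty$ restricts to an isometric isomorphism $c_{0,r}(\naturals^n)\to c_0(\naturals^n)$, because the weight preserves the condition of decay at infinity. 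Consequently each $U_p$ induces an isometric isomorphism $(U_p)^{*}$ between the corresponding dual spaces in the opposite direction.

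Next I would use the classical identifications of the unweighted duals, paired by $\la t,k\ra=\sum_{\alpha\in\naturals^n} t(\alpha)k(\alpha)$, and transport each representing sequence $k$ back through $U_p$. Explicitly, given $\varphi\in\ell_{p,r}(\naturals^n)^{*}$, the functional $\varphi\circ U_p^{-1}$ lies in $\ell_p(\naturals^n)^{*}$ and is therefore represented by a unique $k\in\ell_q(\naturals^n)$; for $s\in V_{p,r}$ this yields
\[
\varphi(s)=\sum_{\alpha\in\naturals^n} s(\alpha)\,r_1^{\alpha_1/p}\cdots r_n^{\alpha_n/p}\,k(\alpha).
\]
Setting $\tilde k(\alpha):=r_1^{\alpha_1/p}\cdots r_n^{\alpha_n/p}\,k(\alpha)$, one checks by substitution that
\[
\sum_{\alpha\in\naturals^n}|\tilde k(\alpha)|^{q}\,r_1^{-q\alpha_1/p}\cdots r_n^{-q\alpha_n/p}
=\sum_{\alpha\in\naturals^n}|k(\alpha)|^{q}=\|k\|_q^{q},
\]
so $\tilde k\in\ell_{q,r^{-q/p}}(\naturals^n)$ with matching norm; the map $\varphi\mapsto\tilde k$ is the desired isometric isomorphism. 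The cases $p=1$ (where the exponent $q/p$ degenerates, and one puts $\tilde k(\alpha):=r^{\alpha}k(\alpha)$ to land in $\ell_{\infty,r^{-1}}$) and $c_{0,r}$ (with the same substitution giving $\tilde k\in\ell_{1,r^{-1}}$) are handled in the same way, using $c_0^{*}=\ell_1$ in the latter.

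Finally I would verify the converse inclusion: every $\tilde k$ in the claimed dual space defines, via $s\mapsto \sum s(\alpha)\tilde k(\alpha)$, a bounded linear functional on $V_{p,r}$ (resp.\ $V_{\infty,r}$ in the $c_{0,r}$ case), by applying H\"older's inequality after pushing the weight across, and then extending by density as in Proposition \ref{denseness}. The main bookkeeping obstacle is simply tracking the weight exponents correctly through the adjoint; there is no substantial analytic difficulty beyond the classical theorems being invoked.
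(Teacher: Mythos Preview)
Your proof is correct and follows essentially the same route as the paper: both arguments construct an isometric isomorphism between $\ell_{p,r}(\naturals^n)$ and $\ell_p(\naturals^n)$ (your $U_p$ is precisely the inverse of the paper's $T_{p,r}$), compose a given functional with this isometry to land in the unweighted dual, and then push the representing sequence back through the weight to identify the correct weighted dual space. You are somewhat more explicit than the paper in verifying the norm equality and in spelling out the converse inclusion via H\"older, but the strategy and the key computation are the same.
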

\begin{proof}
The map defined by
\[
\begin{array}{rl}
    T_{p,r}:\ell_p(\naturals^n)\longrightarrow & \ell_{p,r}(\naturals^n)\\
    (s(\alpha))_{\alpha\in\naturals^n}\longmapsto & (s(\alpha)r_1^{-\frac{\alpha_1}{p}}\ldots r_n^{-\frac{\alpha_n}{p}} )_{\alpha\in\naturals^n}
\end{array}
\]
is an isometric isomorphism with inverse
\[
\begin{array}{rl}
    T^{-1}_{p,r}: \ell_{p,r}(\naturals^n) \longrightarrow & \ell_p(\naturals^n)\\
    (t(\alpha))_{\alpha\in\naturals^n}\longmapsto & (t(\alpha)r_1^{\frac{\alpha_1}{p}}\ldots r_n^{\frac{\alpha_n}{p}})_{\alpha\in\naturals^n}
\end{array}
\]
Let $f\in\ell_{p,r}(\naturals^n)^*$. Then $f\circ T_{p,r}\in\ell_p(\naturals^n)^*=\ell_q(\naturals^n)$. Hence there exist $t\in \ell_q(\naturals^n)$
such that $t=f\circ T_{p,r}$.

Define the function $t' : \naturals^n \longrightarrow \reals$ by
$t'(\alpha)=r_1^{\frac{\alpha_1}{p}}\ldots r_n^{\frac{\alpha_n}{p}} t(\alpha)$, $\alpha \in \naturals^n$.
It is straightforward to verify that $t'\in \ell_{q,r^{-\frac{q}{p}}}(\naturals^n)$ if
$1\leq p < \infty$, and $t'\in \ell_{\infty,r^{-1}}(\naturals^n)$ if $p=1$.
Moreover $t'(\alpha)=f(\delta_{\alpha})$,
where $\delta_{\alpha}$ is the Kroneker function at the point $\alpha \in \naturals^n$.
The proof of $c_{0,r}(\naturals^n)^*=\ell_{1,r^{-1}}(\naturals^n)$ is similar.
\end{proof}
\begin{thm}\label{WGLNK}
Let $1\leq p \leq \infty$. Then:
\begin{enumerate}
	\item{For $1\leq p<\infty$, $\overline{\sos}^{\|\cdot\|_{p,r}}=\Pos(\prod_{i=1}^n[-r_i^{\frac{1}{p}},r_i^{\frac{1}{p}}])$.}
	\item{$\overline{\sos}^{\|\cdot\|_{\infty,r}}=\Pos(\prod_{i=1}^n[-r_i,r_i])$.}
\end{enumerate}
\end{thm}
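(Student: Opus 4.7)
The plan is to reduce Theorem \ref{WGLNK} to Theorem \ref{GLNK} via an isometric change of variables. For $1 \leq p < \infty$, I would introduce the $\reals$-algebra automorphism $\Psi_p : \rx \to \rx$ defined by
\[
\Psi_p(f)(X_1,\ldots,X_n) := f(r_1^{-1/p} X_1,\ldots, r_n^{-1/p} X_n),
\]
together with the analogous map $\Psi_\infty$ in which the exponent $-1/p$ is replaced by $-1$. On the coefficient side, $\Psi_p$ sends $\sum_{\alpha} f_\alpha \ux^\alpha$ to $\sum_{\alpha} f_\alpha r_1^{-\alpha_1/p}\cdots r_n^{-\alpha_n/p}\,\ux^\alpha$, so $\Psi_p$ is just the restriction to $\rx$ of the isometric isomorphism $T_{p,r}$ featured in Lemma \ref{L:dual-weighted}. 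A direct computation confirms that $\|\Psi_p(f)\|_{p,r} = \|f\|_p$ for $1 \leq p < \infty$ and $\|\Psi_\infty(f)\|_{\infty,r} = \|f\|_\infty$, so $\Psi_p$ is an isometric isomorphism $(\rx,\|\cdot\|_p) \to (\rx,\|\cdot\|_{p,r})$.

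Next I would check that $\Psi_p$ transports the two cones relevant to the statement in the expected way. Being a ring homomorphism, $\Psi_p$ sends squares to squares and thus restricts to a bijection $\sos \to \sos$. Interpreting $\Psi_p$ as the substitution $X_i \mapsto r_i^{-1/p} X_i$, a point $y \in \prod_{i=1}^n [-r_i^{1/p}, r_i^{1/p}]$ corresponds bijectively to the point $(r_i^{-1/p} y_i)_i \in [-1,1]^n$; consequently $\Psi_p$ induces a bijection
\[
\Pos([-1,1]^n) \longrightarrow \Pos\!\left(\prod_{i=1}^n [-r_i^{1/p}, r_i^{1/p}]\right),
\]
and similarly for $p = \infty$ with $r_i$ in place of $r_i^{1/p}$.

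Finally, since any isometric isomorphism commutes with topological closure, applying $\Psi_p$ to the equality furnished by Theorem \ref{GLNK} gives
\[
\overline{\sos}^{\|\cdot\|_{p,r}} = \Psi_p\!\left(\overline{\sos}^{\|\cdot\|_p}\right) = \Psi_p(\Pos([-1,1]^n)) = \Pos\!\left(\prod_{i=1}^n [-r_i^{1/p}, r_i^{1/p}]\right),
\]
with the $p = \infty$ case handled by the identical chain using $\Psi_\infty$. There is no genuine obstacle: the argument is pure transport-of-structure along $\Psi_p$, and the only bookkeeping care required is to choose the scaling direction (exponent $-1/p$, not $+1/p$) so that the norm identity and the identification of the positivity set $K$ are compatible.
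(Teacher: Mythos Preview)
Your argument is correct, and its core idea---the scaling substitution $X_i \mapsto r_i^{-1/p}X_i$---is the same as the paper's. The difference is in packaging. The paper establishes the inclusion $\Pos(\prod_i[-r_i^{1/p},r_i^{1/p}])\subseteq\overline{\sos}^{\|\cdot\|_{p,r}}$ via the change of variables (your $\Psi_p$ and its inverse appear there as the maps $f\mapsto\tilde f$ and $g_i\mapsto\tilde g_i$), but then proves the reverse inclusion separately, by invoking Lemma~\ref{L:dual-weighted} to check that evaluations $e_{\underline{x}}$ at points of the open box are $\|\cdot\|_{p,r}$-continuous and hence that the positivity cone is closed. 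You instead observe once and for all that $\Psi_p$ is an isometric isomorphism $(\rx,\|\cdot\|_p)\to(\rx,\|\cdot\|_{p,r})$ preserving both $\sos$ and (bijectively) the relevant positivity cones, so the full equality of Theorem~\ref{GLNK} transports in one stroke. Your route is more economical and bypasses Lemma~\ref{L:dual-weighted} entirely; the paper's route, while slightly longer, makes explicit which evaluation functionals are continuous in the weighted norm, information that feeds into the moment-problem corollaries that follow.
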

\begin{proof}
(1) We first stablish $\Pos(\prod_{i=1}^n[-r_i^\frac{1}{p},r_i^\frac{1}{p}])\subseteq\overline{\sos}^{\|\cdot\|_{p,r}}$. Suppose that $f\in\rx$ and $f\geq0$ on $\prod_{i=1}^n[-r^\frac{1}{p}_i,r_i^\frac{1}{p}]$. Since the polynomial $\tilde{f}(\underline{X})=f(r_1^\frac{1}{p}X_1,\cdots,r_n^\frac{1}{p}X_n)$ is a nonnegative polynomial on $[-1,1]^n$, by Theorem \ref{GLNK}, there exist a sequence $(g_i)_{i\in\naturals}$ in $\sos$ which approaches to $\tilde{f}$ in $\|\cdot \|_p$. On the other hand, a straightforward computation obtains $\|g_i-\tilde{f}\|_p^p= \|\tilde{g_i}-f\|^p_{p,r}$,
where $\tilde{g}_i(\underline{X})=g_i(r_1^\frac{-1}{p}X_1 ,\ldots, r_n^\frac{-1}{p} X_n)$. Since $(\tilde{g}_i)_{i\in\naturals}$ is a
sequence in $\sos$, the result follows.  For the other inclusion, we first note that
\[
    \Pos(\prod_{i=1}^n[-r_i^\frac{1}{p},r_i^\frac{1}{p}])=\Pos(\prod_{i=1}^n (-r_i^\frac{1}{p},r_i^\frac{1}{p}))
=\bigcap_{\underline{x}\in \prod_{i=1}^n (-r_i^\frac{1}{p},r_i^\frac{1}{p})} e_{\underline{x}}^{-1}([0,+\infty)).
\]
A routine computation shows that for every $\underline{x} \in \prod_{i=1}^n(-r^\frac{1}{p}_i,r_i^\frac{1}{p})$,
\[
	(\underline{x}^{\alpha})_{\alpha\in\naturals^n}\in \ell_{\infty,r^{-1}}(\naturals^n) \ \ \text{if}\ \ p=1,
\]
and
\[
	(\underline{x}^{\alpha})_{\alpha\in\naturals^n}\in \ell_{q,r^\frac{-q}{p}}(\naturals^n) \ \ \text{if}\ \ 1<p<\infty,
\]
where $q$ is the conjugate of $p$.
It follows from Lemma \ref{L:dual-weighted} that $e_{\underline{x}}$ is continuous on $V_{p,r}$. Hence $\Pos(\prod_{i=1}^n[-r_i^\frac{1}{p},r_i^\frac{1}{p}])$ is a closed subset of $V_{p,r}$ containing $\sos$, and the inclusion follows.\\

(2) Again,
a routine computation shows that for every $\underline{x} \in \prod_{i=1}^n(-r_i,r_i)$,
$(\underline{x}^{\alpha})_{\alpha\in\naturals^n}\in \ell_{1,r^{-1}}(\naturals^n)$ and the rest follows similar to part (i).
\end{proof}
We now apply Theorem \ref{WGLNK} to obtain the $K$ moment
property for $\sos$ for certain convex compact polyhedron and
weighted norm-$p$ topologies as we summarize below in the following
three theorems:
\begin{thm}\label{L1Case}
Let $r=(r_1,\ldots,r_n)$ with $r_i>0$ for $i=1,\ldots,n$, and let $\ell:\rx\rightarrow\reals$ be a linear functional such that the
sequence $s(\alpha)=\ell(\ux^{\alpha})$ satisfies
\[
        \sup_{\alpha\in\naturals^n}|s(\alpha)|r_1^{-\alpha_1}\cdots r_n^{-\alpha_n}<\infty.
\]
Then $\ell$ is positive semidefinite if and only if there exists a positive Borel measure $\mu$ on $K=\prod_{i=1}^n[-r_i,r_i]$ such that
\[
    \forall f\in\rx  \quad \ell(f)=\int_Kf~d\mu.
\]
\end{thm}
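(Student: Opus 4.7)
The plan is to reduce the theorem to an application of Proposition \ref{KMP} with topology $\tau=\|\cdot\|_{1,r}$, cone $C=\sos$, and set $K=\prod_{i=1}^n[-r_i,r_i]$. The ``only if'' direction is routine: if $\ell(f)=\int_K f\,d\mu$ for a positive Borel measure $\mu$, then $\ell(h^2)=\int_K h^2\,d\mu\ge 0$ for every $h\in\rx$, so $\ell$ is positive semidefinite.

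For the ``if'' direction, the first step is to observe that the hypothesis $M:=\sup_\alpha|s(\alpha)|r_1^{-\alpha_1}\cdots r_n^{-\alpha_n}<\infty$ is exactly the statement that $\ell$ is continuous on $V_{1,r}$. Writing $f=\sum_\alpha f_\alpha\ux^\alpha\in V_{1,r}$, a direct estimate gives
\[
|\ell(f)|=\Bigl|\sum_\alpha f_\alpha\,s(\alpha)\Bigr|\le\sum_\alpha|f_\alpha|\,r_1^{\alpha_1}\cdots r_n^{\alpha_n}\cdot\bigl(|s(\alpha)|\,r_1^{-\alpha_1}\cdots r_n^{-\alpha_n}\bigr)\le M\,\|f\|_{1,r},
\]
so $\ell$ is $\|\cdot\|_{1,r}$-bounded and hence continuous. (Equivalently, by Lemma \ref{L:dual-weighted} the coefficient sequence of $\ell$ lies in $\ell_{\infty,r^{-1}}(\naturals^n)=\ell_{1,r}(\naturals^n)^*$, so $\ell$ is the restriction to $V_{1,r}$ of a continuous functional on $\ell_{1,r}(\naturals^n)$.)

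The second step is to invoke Theorem \ref{WGLNK}(1) with $p=1$, which yields $\overline{\sos}^{\|\cdot\|_{1,r}}=\Pos(\prod_{i=1}^n[-r_i,r_i])=\Pos(K)$; in particular $\Pos(K)\subseteq\overline{\sos}^{\|\cdot\|_{1,r}}$. Finally, positive semidefiniteness of $\ell$ is the statement $\ell\in\sos^\vee_{\|\cdot\|_{1,r}}$, and applying the implication (2)$\Rightarrow$(3) of Proposition \ref{KMP} with $\tau=\|\cdot\|_{1,r}$, $C=\sos$ and the above $K$ produces the desired positive Borel measure on $K$ representing $\ell$.

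No serious obstacle arises: the content is already packaged in Theorem \ref{WGLNK} and Lemma \ref{L:dual-weighted}, and the only genuine verification needed in the proof is the Hölder-type continuity estimate displayed above, which is where the weighted sup condition on $(s(\alpha))$ is used.
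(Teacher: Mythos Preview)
Your proof is correct and follows precisely the route the paper intends: the paper does not give an explicit proof of Theorem \ref{L1Case} but presents it (together with Theorems \ref{T:KMP-weight p-norm} and \ref{T:KMP-weight p-norm2}) as a direct application of Theorem \ref{WGLNK} combined with Proposition \ref{KMP} and the dual identification in Lemma \ref{L:dual-weighted}. Your write-up simply fills in the details---the H\"older-type bound showing $\|\cdot\|_{1,r}$-continuity of $\ell$, the appeal to Theorem \ref{WGLNK}(1) at $p=1$, and the invocation of Proposition \ref{KMP}---exactly as the paper's framing suggests.
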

\begin{thm}\label{T:KMP-weight p-norm}
Let $1< p< \infty$, $q$ the conjugate of $p$, and $r=(r_1,\ldots,r_n)$ with $r_i>0$ for $i=1,\ldots,n$. Suppose that
$\ell:\rx\rightarrow\reals$ is a linear functional such that the sequence $s(\alpha)=\ell(\ux^{\alpha})$ satisfies
\[
    \sum_{\alpha\in\naturals^n}|s(\alpha)|^qr_1^{-\frac{q}{p}\alpha_1}\cdots r_n^{-\frac{q}{p}\alpha_n}<\infty.
\]
Then $\ell$ is positive semidefinite if and only if there exists a positive Borel measure $\mu$ on
$K=\prod_{i=1}^n[-r_i^{\frac{1}{p}},r_i^{\frac{1}{p}}]$ such that
\[
    \forall f\in\rx \quad \ell(f)=\int_Kf~d\mu.
\]
\end{thm}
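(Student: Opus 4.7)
The plan is to derive this statement as a direct corollary of Theorem \ref{WGLNK}(1) together with Proposition \ref{KMP} and Corollary \ref{duality}; no genuinely new estimate is required. The ``if'' direction (measure $\Rightarrow$ positive semidefinite) is immediate from $\ell(h^2)=\int_K h^2\,d\mu \geq 0$, so I will focus on the ``only if'' direction.

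First I would reinterpret the summability hypothesis in topological terms. The condition
\[
\sum_{\alpha\in\naturals^n}|s(\alpha)|^q r_1^{-\frac{q}{p}\alpha_1}\cdots r_n^{-\frac{q}{p}\alpha_n}<\infty
\]
says exactly that the moment sequence $(s(\alpha))_{\alpha\in\naturals^n}$ belongs to $\ell_{q,r^{-q/p}}(\naturals^n)$. By Lemma \ref{L:dual-weighted}, this space is the continuous dual of $\ell_{p,r}(\naturals^n)$, and since $V_{p,r}$ is dense in $\ell_{p,r}(\naturals^n)$, this is equivalent to saying that $\ell$, viewed as a linear functional on $V_{p,r}=\rx$, is $\|\cdot\|_{p,r}$-continuous.

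Next I would invoke Theorem \ref{WGLNK}(1), which gives $\overline{\sos}^{\|\cdot\|_{p,r}}=\Pos(K)$ for $K=\prod_{i=1}^n[-r_i^{1/p},r_i^{1/p}]$. Combined with Corollary \ref{duality} this yields
\[
\Pos(K)=\overline{\sos}^{\|\cdot\|_{p,r}}=\sos_{\|\cdot\|_{p,r}}^{\vee\vee},
\]
which is precisely condition (2) of Proposition \ref{KMP} with $\tau=\|\cdot\|_{p,r}$ and $C=\sos$. Applying the implication (2)$\Rightarrow$(3) of Proposition \ref{KMP}, every $\|\cdot\|_{p,r}$-continuous linear functional that is nonnegative on $\sos$ is integration against a positive Borel measure on $K$.

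Our $\ell$ satisfies both requirements: $\|\cdot\|_{p,r}$-continuity by the first step and nonnegativity on $\sos$ by the positive semidefinite hypothesis. The conclusion follows. I do not anticipate any real obstacle, as the proof is essentially a bookkeeping assembly of Lemma \ref{L:dual-weighted}, Theorem \ref{WGLNK}(1) and Proposition \ref{KMP}; the only point requiring care is matching the weighted-norm exponents correctly so that the dual of $\ell_{p,r}$ produces exactly the weight $r^{-q/p}$ that appears in the hypothesis.
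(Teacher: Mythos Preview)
Your proposal is correct and follows essentially the same approach as the paper: the paper presents Theorem~\ref{T:KMP-weight p-norm} (together with Theorems~\ref{L1Case} and~\ref{T:KMP-weight p-norm2}) as a direct application of Theorem~\ref{WGLNK}, assembled via Lemma~\ref{L:dual-weighted}, Corollary~\ref{duality}, and Proposition~\ref{KMP}, exactly as you outline. Your identification of the summability hypothesis with $\|\cdot\|_{p,r}$-continuity through the dual space $\ell_{q,r^{-q/p}}(\naturals^n)$ is the key step, and you have matched the exponents correctly.
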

\begin{thm}\label{T:KMP-weight p-norm2}
Let $r=(r_1,\ldots,r_n)$ with $r_i>0$ for $i=1,\ldots,n$, and let $\ell:\rx\rightarrow\reals$ be a linear functional such that the
sequence $s(\alpha)=\ell(\ux^{\alpha})$ satisfies
\[
        \sum_{\alpha\in\naturals^n}|s(\alpha)|r_1^{-\alpha_1}\cdots r_n^{-\alpha_n}<\infty.
\]
Then $\ell$ is positive semidefinite if and only if there exists a positive Borel measure $\mu$ on $K=\prod_{i=1}^n[-r_i,r_i]$ such that
\[
    \forall f\in\rx  \quad \ell(f)=\int_Kf~d\mu.
\]
\end{thm}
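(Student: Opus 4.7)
The strategy is to apply Proposition~\ref{KMP} with $C=\sos$, $\tau=\|\cdot\|_{\infty,r}$, and $K=\prod_{i=1}^n[-r_i,r_i]$. Theorem~\ref{WGLNK}(2) says exactly that $\overline{\sos}^{\|\cdot\|_{\infty,r}}=\Pos(K)$, so by Corollary~\ref{duality} we have $\Pos(K)=(\sos)^{\vee\vee}_{\tau}$, that is, condition~(2) of Proposition~\ref{KMP} is already in hand for this triple $(\tau,K,C)$. It therefore remains only to recognize the summability hypothesis on $s$ as $\tau$-continuity of $\ell$, and then read off condition~(3) of Proposition~\ref{KMP}.

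The reverse implication is immediate: if $\ell(f)=\int_K f\,d\mu$ for some positive Borel measure $\mu$, then $\ell(h^2)=\int_K h^2\,d\mu\ge 0$ for every $h\in\rx$, so $\ell$ is positive semidefinite.

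For the forward implication, I would first translate the hypothesis $\sum_{\alpha}|s(\alpha)|r_1^{-\alpha_1}\cdots r_n^{-\alpha_n}<\infty$ into continuity of $\ell$ on $V_{\infty,r}=\rx$ with respect to $\|\cdot\|_{\infty,r}$. Since $V_{\infty,r}$ is dense in the Banach space $c_{0,r}(\naturals^n)$, boundedness of $\ell$ on $V_{\infty,r}$ is equivalent to a unique continuous extension $\tilde\ell$ to $c_{0,r}(\naturals^n)$. By Lemma~\ref{L:dual-weighted} we have $c_{0,r}(\naturals^n)^{*}=\ell_{1,r^{-1}}(\naturals^n)$, and under the identification via the monomial basis the functional represented by $(s(\alpha))_{\alpha}=(\ell(\ux^{\alpha}))_{\alpha}$ lies in this dual precisely when $(s(\alpha))_{\alpha}\in\ell_{1,r^{-1}}(\naturals^n)$, which is exactly our hypothesis. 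Once $\ell$ is $\tau$-continuous and positive semidefinite, $\ell\in(\sos)^{\vee}_{\tau}$, and Proposition~\ref{KMP} together with Theorem~\ref{WGLNK}(2) produces the desired positive Borel measure on $K$.

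The argument is essentially a routine assembly of earlier results, so there is no single difficult step. The only point that requires care is the identification of the completion of $(V_{\infty,r},\|\cdot\|_{\infty,r})$ as $c_{0,r}(\naturals^n)$ rather than the larger space $\ell_{\infty,r}(\naturals^n)$; it is this distinction that makes Lemma~\ref{L:dual-weighted} applicable and that explains why the $\ell_{1,r^{-1}}$-summability hypothesis (rather than only $\ell_{\infty,r^{-1}}$-boundedness, as in Theorem~\ref{L1Case}) is the natural continuity condition in this setting.
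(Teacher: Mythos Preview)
Your proof is correct and follows exactly the approach intended by the paper: the three theorems following Theorem~\ref{WGLNK} are presented there without proof as direct applications of that theorem together with Proposition~\ref{KMP}, Corollary~\ref{duality}, and Lemma~\ref{L:dual-weighted}, and your argument supplies precisely these details in the case $p=\infty$. Your identification of the summability hypothesis with membership of $(s(\alpha))_\alpha$ in $\ell_{1,r^{-1}}(\naturals^n)=c_{0,r}(\naturals^n)^*$, hence with $\|\cdot\|_{\infty,r}$-continuity of $\ell$, is the only point requiring verification, and you handle it correctly.
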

\subsection{Closure of $\sos$ in the Topology of Coefficientwise Convergence}
In this final section, we characterize the closure of $\sos$ in the coefficientwise convergent topology.
A net $\{f_i\} \in \rx$ converges in the {\it  coefficientwise convergent topology} to $f\in \rx$
if for every $\alpha \in \naturals^n$, the coefficients of $\ux^\alpha$ in $f_i$ converges
to the coefficient of $\ux^\alpha$ in $f$.

\begin{prop}\label{pwmp}
For a polynomial $f(\ux)\in\rx$, the followings are equivalent:
\begin{enumerate}
	\item{$f(0)\ge0$.}
	\item{$f$ is coefficientwise limit of squares in $\rx$.}
	\item{$f$ is coefficientwise limit of elements of $\sos$.}
\end{enumerate}
\end{prop}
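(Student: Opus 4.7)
The plan is to prove the cycle (2)$\Rightarrow$(3)$\Rightarrow$(1)$\Rightarrow$(2). The first implication is immediate since every square lies in $\sos$. For (3)$\Rightarrow$(1), suppose $\sigma_i = \sum_j h_{ij}^2 \in \sos$ converges to $f$ coefficientwise. Then the constant term $\sigma_i(0) = \sum_j h_{ij}(0)^2$ is nonnegative and converges to $f(0)$, so $f(0) \geq 0$.

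The real content is (1)$\Rightarrow$(2), which I would split into two subcases based on the sign of $f(0)$. Suppose first that $f(0) > 0$. My plan is to construct an explicit formal power series square root of $f$ and then approximate by polynomial truncations. Write $f = c + g$ where $c := f(0) > 0$ and $g \in \rx$ has no constant term, and define in $\rfps$
\[
h := \sqrt{c}\,\sum_{k=0}^{\infty}\binom{1/2}{k}(g/c)^k.
\]
Since $g^k$ has only terms of degree $\geq k$, the series is well-defined in $\rfps$ (only finitely many $k$ contribute to each fixed coefficient), and formally $h^2 = c(1 + g/c) = f$. Let $h_n \in \rx$ be the truncation of $h$ to monomials of total degree $\leq n$, and write $h = h_n + r_n$ with $r_n \in \rfps$ supported in degrees $>n$. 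Then $h_n^2 = f - 2 h r_n + r_n^2$, and the error $-2 h r_n + r_n^2$ is again supported in degrees $>n$. Hence for every multi-index $\alpha$ with $|\alpha| \leq n$, the coefficient of $\ux^\alpha$ in $h_n^2$ equals that in $f$, so $h_n^2 \to f$ coefficientwise.

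For the remaining case $f(0) = 0$, I would reduce to the previous one by perturbation. For each $n \geq 1$, the polynomial $f + \tfrac{1}{n}$ has positive constant term, so by the previous case it lies in the coefficientwise closure of $\{h^2 : h \in \rx\}$. Since $f + \tfrac{1}{n} \to f$ coefficientwise and a closure is closed under taking limits of its own members, $f$ also belongs to this closure. To produce an explicit sequence of squares converging to $f$, a diagonal argument picks, for each $n$, a square approximating $f + \tfrac{1}{n}$ within $1/n$ on all coefficients of degree $\leq n$; this sequence converges coefficientwise to $f$.

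The main obstacle I anticipate is the bookkeeping in the truncation step: one must carefully distinguish polynomial computations in $\rx$ from formal-series computations in $\rfps$, and check that the error $h_n^2 - f$ lies in the $(\ux)$-adic ideal of series supported in degrees $>n$. Once this filtration argument is in place, the rest of the proof is routine, and the passage from $f(0)>0$ to $f(0)=0$ is a soft topological observation.
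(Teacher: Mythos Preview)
Your proof is correct and uses essentially the same idea as the paper: take a formal power-series square root and truncate. The only organizational difference is that the paper handles both cases at once by defining $h_i$ as the degree-$i$ truncation of $\sqrt{\tfrac{1}{i}+f}$ (which has positive constant term for every $i$ since $f(0)\ge 0$) and observing directly that $h_i^2\to f$ coefficientwise; this folds your perturbation-and-diagonal step into the truncation itself and avoids the case split.
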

\begin{proof}
(2)$\Rightarrow$(3) and (3)$\Rightarrow$(1) are clear. It remains to prove (1)$\Rightarrow$(2). For each $i\ge1$ let $g_{i0}+g_{i1}+g_{i2}+\cdots$ be the 
power series expansion of $\sqrt{\frac{1}{i}+f}$ in the ring of formal power series $\rfps$ where $g_{ij}$ is a form of degree $j$ in $\rx$. 
Let $h_i=g_{i0}+g_{i1}+\cdots+g_{ii}$, then $h_i^2\xrightarrow{i\rightarrow\infty} f$ in the topology of coefficientwise convergence.
\end{proof}

\end{document}